\title{Exact Limit Theorems for Restricted Integer Partitions}
\author{Asaf Cohen\thanks{School of Mathematical Sciences, Tel Aviv University, Tel Aviv, 6997801, Israel. Supported in part by ISF Grant 1145/18.} \and Asaf Shapira \thanks{
School of Mathematics, Tel Aviv University, Tel Aviv 69978, Israel.
Email: asafico$@$tau.ac.il. Supported in part by ISF Grant 1028/16, ERC Consolidator Grant 863438 and NSF-BSF Grant 20196.}}
\date{\today}
\newlist{casesp}{enumerate}{3} 
\setlist[casesp]{align=left, 
                 listparindent=\parindent, 
                 parsep=\parskip, 
                 font=\normalfont\bfseries, 
                 leftmargin=0pt, 
                 labelwidth=0pt, 
                 itemindent=.4em,labelsep=.4em, 
                 partopsep=0pt, 
                 }
\setlist[casesp,1]{label=Case~\arabic*:,ref=\arabic*}
\setlist[casesp,2]{label=Case~\thecasespi.\roman*:,ref=\thecasespi.\roman*}
\setlist[casesp,3]{label=Case~\thecasespii.\alph*:,ref=\thecasespii.\alph*}
\theoremstyle{plain}
\newtheorem{theorem}{Theorem}[section]
\newtheorem{lemma}[theorem]{Lemma}
\newtheorem{claim}[theorem]{Claim}
\def\moverlay{\mathpalette\mov@rlay}
\def\mov@rlay#1#2{\leavevmode\vtop{%
   \baselineskip\z@skip \lineskiplimit-\maxdimen
   \ialign{\hfil$\m@th#1##$\hfil\cr#2\crcr}}}
\newcommand{\charfusion}[3][\mathord]{
    #1{\ifx#1\mathop\vphantom{#2}\fi
        \mathpalette\mov@rlay{#2\cr#3}
      }
    \ifx#1\mathop\expandafter\displaylimits\fi}
\renewenvironment{proof}[1][\proofname]
{\par\pushQED{\qed}
	\normalfont\topsep6\p@\@plus6\p@\relax\trivlist
	\item[\hskip\labelsep\bfseries#1\@addpunct{.}]
	\ignorespaces}
{\popQED\endtrivlist\@endpefalse}
\definecolor{RED}{rgb}{1,0,0}\definecolor{BLUE}{rgb}{0,0,1} 
\begin{document}
\date{}
\maketitle

\begin{abstract}
For a set of positive integers $A$, let $p_A(n)$ denote the number of ways to write $n$ as a sum
of integers from $A$, and let $p(n)$ denote the usual partition function. In the early 40s, Erd\H{o}s
extended the classical Hardy--Ramanujan formula for $p(n)$ by showing that $A$ has density $\alpha$
if and only if $\log p_A(n) \sim \log p(\alpha n)$. Nathanson asked if Erd\H{o}s's theorem holds also with respect to $A$'s lower density, namely,
whether $A$ has lower-density $\alpha$ if and only if $\log p_A(n) / \log p(\alpha n)$ has lower limit $1$.
We answer this question negatively by constructing, for every $\alpha > 0$, a set of integers $A$ of lower density $\alpha$, satisfying
$$
\liminf_{n \rightarrow \infty} \frac{\log p_A(n)}{\log p(\alpha n)} \geq \left(\frac{\sqrt{6}}{\pi}-o_{\alpha}(1)\right)\log(1/\alpha)\;.
$$
We further show that the above bound is best possible (up to the $o_\alpha(1)$ term), thus determining the exact extremal relation between
the lower density of a set of integers and the lower limit of its partition function.
We also prove an analogous theorem with respect to the upper density of a set of integers, answering another question of Nathanson.


\end{abstract}

\section{Introduction}\label{secintro}

A partition of an integer $n$ is a sequence of positive integers $a_1 \leq a_2 \leq \ldots $ whose sum is $n$.
The classical partition function $p(n)$ denotes the number of partitions of $n$. More generally, for a set of positive integers $A$,
we denote by $p_A(n)$ the number of partitions of $n$ using integers taken from $A$. The study of various properties
of these restricted partition functions is amongst the oldest topics in mathematics. Some classical examples are Euler's Pentagonal Numbers Theorem and the Rogers--Ramanujan identities.
The reader is referred to \cite{Andrews72,Andrews1976,Andrews1976-1} for a more thorough background on this topic.
Our goal in this paper is to obtain asymptotic estimates for such restricted partition functions.
Arguably, the most well known result of this type is the classical Hardy--Ramanujan
formula \cite{HR1} (discovered independently by Uspensky \cite{Uspensky1920}) stating\footnote{The results of \cite{HR1,Uspensky1920} actually give much more accurate asymptotic estimates for $p(n)$.} that

\begin{equation}\label{equation - Hardy Ramanujan}
    p(n)\sim\frac{1}{4n\sqrt{3}}e^{\pi \sqrt{\frac{2n}{3}}}\;.
\end{equation}

Following \cite{HR1}, asymptotic estimates for $p_A(n)$ were obtained for various sets $A$. For example, already Hardy and Ramanujan \cite{HR2} obtained bounds analogous to \eqref{equation - Hardy Ramanujan} when $A$ is the set of primes, when $A$ is the set of odd integers, and when $A$ is the set of $k^{th}$ powers of positive integers. Szekeres \cite{Szekeres1,Szekeres2} obtained tight asymptotic bounds for partitions
avoiding large numbers, that is, when\footnote{We use $[a,b]$ to denote the integers $\{a,\ldots,b\}$. We also use $[a]$ to denote the integers $\{1,\ldots,a\}$.} $A=[1,m(n)]$ for various functions $m(n)$, see also \cite{Canfield,Romik}. In the other direction,
Diximier and Nicolas \cite{DN} studied partitions avoiding small integers, namely, when $A=[m(n),n]$ for various functions $m(n)$, see also \cite{Mosaki,MosakiNicolasSarkozy}. Finally, Nathanson \cite{Nat2000b} and Erd\H{o}s and Lehner \cite {Erdos2} studied the case of $A$ of fixed size.


In a remarkable paper from the early 40’s, Erd\H{o}s \cite{Erdos} gave an elementary proof of a slightly weaker version of (\ref{equation - Hardy Ramanujan}). He further extended (\ref{equation - Hardy Ramanujan}) by showing that if $A$ is a set of density $\alpha$ with\footnote{Note that if $\gcd(A)=d > 1$ then trivially $p_A(n)=0$ whenever $n$ is not divisible by $d$. In our proofs it will be very easy to guarantee that $\gcd(A)=1$ since all the sets $A$ we construct contain two consecutive integers.} $\gcd(A)=1$,
then $p_A(n)$ behaves like $p(\alpha n)$, more precisely
\footnote{For simplicity, we frequently remove floor/ceiling notation when they make no real difference. For example, in (\ref{eqErdos1}) the $\alpha n$ should really be $\lfloor \alpha n \rfloor$. Also, throughout the paper, logarithms are natural unless stated otherwise.}
\begin{equation}\label{eqErdos1}
\lim_{n \to \infty}\frac{\log p_A(n)}{\log p(\alpha n)} = \lim_{n \to \infty}\frac{\log p_A(n)}{\pi \sqrt{2\alpha n/3}} = 1\;.
\end{equation}
More surprisingly, using the Hardy--Littlewood Tauberian Theorem \cite{HardyLittlewood1914}, Erd\H{o}s proved\footnote{A remark for the history buff: this result was actually stated as an open problem in the preliminary version of \cite{Erdos} and then sketched in the published version.
A full proof was given by Nathanson \cite{Nat2000a}, see also \cite{Nat2000c}.}
an ``inverse theorem’’, stating that if $A$ satisfies (\ref{eqErdos1}) then $A$ has density $\alpha$.
Together, these two theorems imply that $A$ has density $\alpha$ if and only if (\ref{eqErdos1}) holds.
Other inverse theorems of this type were obtained in \cite{Freiman1955,Kohlbecker1958,Yang2001}.

Given Erd\H{o}s's theorem \cite{Erdos}, it is natural to ask if the lower and upper densities of $A$ also uniquely determine
the lower and upper limits of $p_A(n)$. That is, whether $A$ has lower density $\alpha$ (respectively, upper density $\beta$) if and only if $\liminf_{n \to \infty} \frac{\log p_A(n)}{\log p(\alpha n)} =1$ (respectively, $\limsup_{n \to \infty} \frac{\log p_A(n)}{\log p(\beta n)}=1$).
This question was first raised by Nathanson \cite{Nat2000a}, who further
proved the following theorem, which is a strengthening\footnote{Note that when $\alpha=\beta$ (i.e. when $A$ has density $\alpha$) this theorem is equivalent to Erd\H{o}s's first theorem.} of the first theorem of Erd\H{o}s mentioned above.

\begin{theorem}[Nathanson \cite{Nat2000a}] Suppose $A$ is a set of integers with $\gcd(A)=1$ of lower density $\alpha$ and upper density $\beta$. Then
\begin{equation}\label{Nathan1}
\liminf_{n \to \infty} \frac{\log p_A(n)}{\log p(\alpha n)} \geq 1 ~~~~~\mbox{and}~~~~~ \limsup_{n \to \infty} \frac{\log p_A(n)}{\log p(\beta n)} \leq 1\;.
\end{equation}
\end{theorem}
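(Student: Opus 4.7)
The plan is to exploit the partition generating function $P_A(x) := \prod_{a\in A}(1-x^a)^{-1} = \sum_n p_A(n)\,x^n$ together with its Mellin companion $T_A(t) := \sum_{a\in A} e^{-at}$, linked by the elementary identity $\log P_A(e^{-t}) = \sum_{k\geq 1} T_A(kt)/k$. The first step is to translate the density hypotheses on $A$ into asymptotic bounds on these generating functions as $t \to 0^+$. Abel summation gives $T_A(t) = t\int_0^\infty A(u)\,e^{-ut}\,du$ with $A(u) := |A\cap[1,u]|$, and the substitution $v = ut$ rewrites this as $t\,T_A(t) = \int_0^\infty (A(v/t)/(v/t))\,v e^{-v}\,dv$. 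Fatou's lemma applied to the liminf of the integrand (resp.\ dominated convergence applied to its limsup, using the trivial bound $A(u) \leq u$) yields $\liminf_{t\to 0^+} t\,T_A(t) \geq \alpha$ and $\limsup_{t\to 0^+} t\,T_A(t) \leq \beta$. Summing over $k\geq 1$ (with the tail $\sum_{k > K} T_A(kt)/k$ controlled by $T_A(s) \leq 1/(e^s-1)$) propagates these uniform-in-$t$ bounds to $\log P_A(e^{-t}) \geq (\alpha - o(1))\pi^2/(6t)$ and $\log P_A(e^{-t}) \leq (\beta + o(1))\pi^2/(6t)$.

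The upper bound in \eqref{Nathan1} is then immediate. The trivial inequality $P_A(e^{-t}) \geq p_A(n)\,e^{-nt}$ gives $\log p_A(n) \leq nt + \log P_A(e^{-t}) \leq nt + (\beta + o(1))\pi^2/(6t)$, which optimized at $t = \pi\sqrt{\beta/(6n)}$ yields $\log p_A(n) \leq (1+o(1))\pi\sqrt{2\beta n/3} = (1+o(1))\log p(\beta n)$ for all large $n$, so the limsup is at most $1$.

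The lower bound requires a saddle-point/Tauberian argument. I would view $\mu_t(m) := p_A(m)\,e^{-mt}/P_A(e^{-t})$ as a probability measure on $\mathbb{Z}_{\geq 0}$, whose mean and variance are $M(t) = -\frac{d}{dt}\log P_A(e^{-t})$ and $V(t) = \frac{d^2}{dt^2}\log P_A(e^{-t})$. Re-running the Abel/Fatou argument on the series defining $M$ and $V$ (whose integrands involve $v/(e^v-1)$ and its derivatives, all dominated and integrable) gives the uniform estimates $M(t) \geq (\alpha - o(1))\pi^2/(6t^2)$ and $V(t) = O(1/t^3)$ as $t \to 0^+$. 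For a large target $n$, the continuity and strict monotonicity of $M$ let me choose $\tau=\tau(n)$ with $M(\tau) = n - 10\sqrt{V(\tau)}$; then $\tau = \Theta(n^{-1/2})$ and the window $[M(\tau)-4\sqrt{V(\tau)},\,M(\tau)+4\sqrt{V(\tau)}]$ (of width $O(n^{3/4})$) lies entirely in $[0,n]$. Chebyshev puts at least $15/16$ of the $\mu_\tau$-mass in this window, so some integer $m \leq n$ inside satisfies $\mu_\tau(m) \geq c/\sqrt{V(\tau)}$, i.e.
\[
\log p_A(m) \;\geq\; \log P_A(e^{-\tau}) + m\tau - \tfrac12\log V(\tau) - O(1).
\]
Combining the bounds $\log P_A(e^{-\tau}) \geq (\alpha-o(1))\pi^2/(6\tau)$, $m\tau \geq n\tau - O(n^{1/4})$, and the AM-GM inequality $a/\tau + n\tau \geq 2\sqrt{an}$ (with $a = (\alpha-o(1))\pi^2/6$) yields $\log p_A(m) \geq (1-o(1))\pi\sqrt{2\alpha m/3}$.

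Finally, I propagate from $m$ to $n$ itself using $\gcd(A)=1$: this hypothesis supplies some $r_0$ such that every integer $r \geq r_0$ is expressible as a sum of $A$-elements (with repetition), and hence $p_A(n) \geq p_A(n-r)$ for all such $r$. Since $n-m \geq 6\sqrt{V(\tau)} \gg r_0$ for large $n$, we get $p_A(n) \geq p_A(m)$; and $m = n(1-o(1))$ implies $\sqrt{m} = (1-o(1))\sqrt{n}$, so $\log p_A(n) \geq (1-o(1))\pi\sqrt{2\alpha n/3}$ for all large $n$, which is the liminf bound. The main obstacle is the lower-bound direction: the pointwise Tauberian-type input on $\log P_A$ does not by itself yield the bounds on $M(t)$ and $V(t)$ needed for concentration, so the Abel/Fatou estimates must be redone at the level of derivatives; the $\gcd(A)=1$ hypothesis is then essential to turn the sparse saddle-point witnesses into a lower bound at every large $n$.
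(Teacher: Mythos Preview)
The paper does not prove this statement; it is quoted (with attribution to Nathanson \cite{Nat2000a}) as background for the paper's own Theorems~\ref{theoupper} and~\ref{theolower}, so there is no in-paper argument to compare against.

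That said, your proof is correct and is essentially the standard generating-function route used by Erd\H{o}s and Nathanson. The $\limsup$ inequality is the one-line estimate $\log p_A(n)\le nt+\log P_A(e^{-t})$ optimized in $t$, once $\limsup_{t\to0^+}t\log P_A(e^{-t})\le\beta\pi^2/6$ is established; your Abel-summation computation $tT_A(t)=\int_0^\infty (A(v/t)/(v/t))\,ve^{-v}\,dv$ together with reverse Fatou (which is what you mean by ``dominated convergence applied to its limsup'': apply Fatou to $ve^{-v}-f_t(v)\ge0$) gives this cleanly. For the $\liminf$ inequality your Chebyshev concentration around the mean $M(\tau)$ is the right idea, and you correctly flag the one genuine subtlety: the asymptotic on $\log P_A(e^{-t})$ cannot simply be differentiated, so the Abel/Fatou step must be redone at the level of $M(t)=\sum_{a\in A}a/(e^{at}-1)$ and $V(t)=\sum_{a\in A}a^2e^{at}/(e^{at}-1)^2$. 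Writing $t^2M(t)=\int_0^\infty (A(v/t)/(v/t))\,(-vh'(v))\,dv$ with $h(v)=v/(e^v-1)$ (so that $-vh'(v)\ge0$ integrates to $\pi^2/6$) gives $\liminf t^2M(t)\ge\alpha\pi^2/6$ by Fatou, and the crude bound $V(t)\le\sum_{n\ge1}n^2e^{nt}/(e^{nt}-1)^2=O(t^{-3})$ suffices for the variance. Two minor points worth tightening: the solvability of $M(\tau)=n-10\sqrt{V(\tau)}$ follows from continuity and the range of $\tau\mapsto M(\tau)+10\sqrt{V(\tau)}$ being $(0,\infty)$, not from monotonicity of $M$ alone; and the existence of $r_0$ with every $r\ge r_0$ an $A$-sum follows by passing to a finite subset of $A$ with $\gcd$ equal to $1$ and invoking the numerical-semigroup (Frobenius) fact. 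Neither is a real gap.
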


Nathanson \cite{Nat2000a} asked if the above inequalities are in fact equalities, namely, whether one can prove inverse theorems (in the sense of Erd\H{o}s’s inverse theorem mentioned above) with respect to the lower and upper densities of $A$. Our main qualitative results in this paper are
that (perhaps unexpectedly) the answers to both questions are negative. As we explain below,
we moreover give optimal quantitative results relating the lower/upper densities of $A$ and the lower/upper limits of $p_A(n)$.

Our first result deals with the upper density of $A$. It shows that for all small enough $\beta$ there is a set $A$ of upper density $\beta$ so that
$\limsup_{n \to \infty} \frac{\log p_A(n)}{\log p(\beta n)}<1$.
We in fact determine precisely how small can this upper limit be.
This, in particular, implies a negative answer to Nathanson’s question for all small enough $\beta$.

\begin{theorem}\label{theoupper} For every $0<\beta < 1$ there is a set of integers $A$ with $\gcd(A)=1$ of upper density $\beta$ satisfying
\begin{equation}\label{eqTheo1}
\limsup_{n\to \infty} \frac{\log p_A(n)}{\log p(\beta n)} \leq \frac{\sqrt{6}\log 2}{\pi} + o_\beta(1)\;.
\end{equation}
Furthermore, the constant above is best possible. Namely, any $A$ of upper density $\beta$ satisfies
\begin{equation}\label{eqTheo2}
\limsup_{n \to \infty} \frac{\log p_A(n)}{\log p(\beta n)} \geq \frac{\sqrt{6}\log 2}{\pi} + o_\beta(1)\;.
\end{equation}
\end{theorem}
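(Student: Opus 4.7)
The plan is to prove the two inequalities of Theorem~\ref{theoupper} separately. I expect a pigeonhole argument to yield the tight lower bound on $\limsup$ (inequality~\eqref{eqTheo2}), while an explicit block construction realizes the matching upper bound~\eqref{eqTheo1}.

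\textbf{Lower bound (optimality).} Fix $\epsilon>0$ and, using $\limsup_N |A\cap[1,N]|/N = \beta$, pick arbitrarily large $N$ with $|A\cap[1,N]| \geq (\beta-\epsilon)N$. Consider all multisets of size $k := \lceil(\beta-\epsilon)N\rceil$ drawn from $A\cap[1,N]$. Their number is at least
\[
\binom{2k-1}{k} = (1+o(1))\,\frac{4^{k}}{2\sqrt{\pi k}},
\]
and every such multiset has sum in the interval $[k, kN]$. By pigeonholing over the $\leq \beta N^2$ possible values of the sum, some integer $n^* \leq (\beta-\epsilon)N^2$ is hit by at least $\binom{2k-1}{k}/(kN)$ of them, giving $\log p_A(n^*) \geq 2(\beta-\epsilon)N\log 2 - O(\log N)$. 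Combining with the estimate $\log p(\beta n^*) \leq (1+o(1))\,\pi N\sqrt{2\beta(\beta-\epsilon)/3}$ (via~\eqref{eqErdos1}) yields
\[
\frac{\log p_A(n^*)}{\log p(\beta n^*)} \geq \frac{\sqrt{6}\log 2}{\pi}\,\sqrt{\frac{\beta-\epsilon}{\beta}}\,(1 - o_N(1)).
\]
Sending $N\to\infty$ and then $\epsilon\to 0$ establishes~\eqref{eqTheo2}.

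\textbf{Upper bound (construction).} Choose a super-exponentially growing sequence $n_1 < n_2 < \cdots$ (e.g.\ $n_{k+1} = n_k^{10}$) and set
\[
A := \{1\} \cup \bigcup_{k\geq 1} A_k, \qquad A_k \subseteq [(1-\beta)n_k,\, n_k]\cap\mathbb{Z}, \quad |A_k| = \lceil\beta n_k\rceil.
\]
The element $1$ enforces $\gcd(A)=1$ and $p_A(n)>0$ for every $n$, while the rapid growth of $n_k$ ensures that the upper density of $A$ equals $\beta$ exactly (attained at $n = n_k$) and the lower density equals $0$. To bound $p_A(n)$ I decompose a partition by its block: $p_A(n) \leq \prod_j M_j(n)$, where $M_j(n)$ is the number of multisets of $A_j$ with sum at most $n$. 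With $s_{\max} = \lfloor n/((1-\beta)n_j)\rfloor$ the hockey-stick identity gives $M_j(n) \leq \binom{\beta n_j + s_{\max}}{s_{\max}}$. Writing $n = c\beta n_{k-1}^2$ for the dominant block $j = k-1$ and invoking the entropy estimate yields
\[
\log M_{k-1}(n) \leq \beta n_{k-1}\bigl[(1+c)\ln(1+c) - c\ln c\bigr]\,(1 + o_\beta(1)),
\]
while the growth condition renders all other blocks' contributions negligible. An elementary calculus exercise shows that $h(c) := [(1+c)\ln(1+c) - c\ln c]/\sqrt{c}$ has a unique maximum $2\ln 2$ at $c=1$, so uniformly in $n$,
\[
\log p_A(n) \leq 2\log 2 \cdot \sqrt{\beta n}\,(1 + o_\beta(1)),
\]
which combined with $\log p(\beta n) \sim \pi\sqrt{2\beta n/3}$ yields~\eqref{eqTheo1}.

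\textbf{Main obstacle.} The delicate step is the \emph{uniform} upper bound on $p_A(n)$ in the construction: one must handle every integer $n$ (not only the special values $n \sim \beta n_k^2$ at which the maximum is attained) and verify that partitions mixing elements from several blocks $A_j$, or using the element $1$ together with a block, do not push $\log p_A(n)/\sqrt{\beta n}$ above $2\log 2$. The pigeonhole lower bound, by contrast, is conceptually transparent once one identifies $n^* \sim \beta N^2$ as the natural scale: this is precisely where the multiset exponent $2\beta N\log 2$ matches the partition-function exponent $\pi\beta N\sqrt{2/3}$, forcing the extremal constant $\sqrt{6}\log 2/\pi$.
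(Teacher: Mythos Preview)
Your proposal is correct and follows essentially the same strategy as the paper: a pigeonhole count of multisets gives the lower bound~\eqref{eqTheo2}, and the block construction $A_k\subseteq[(1-\beta)n_k,n_k]$ together with the entropy bound on $\binom{\beta n_k+s_{\max}}{s_{\max}}$ and the calculus fact $\max_c h(c)=2\ln 2$ gives~\eqref{eqTheo1}. The differences are cosmetic: the paper uses the tower growth $f(i+1)=2^{f(i)}$ rather than your $n_{k+1}=n_k^{10}$ (both suffice to make the smaller blocks negligible), secures $\gcd(A)=1$ via consecutive integers rather than adjoining~$1$, splits $A\cap[m]$ into two halves in the pigeonhole step to pin down the range of $n^*$ more tightly, and carries the exact $\beta$-dependence through the entropy calculation (obtaining $2\sqrt{\beta/(1-\beta)}$ rather than $2\sqrt{\beta}(1+o_\beta(1))$), but none of these changes the substance of the argument.
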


Our second and main result deals with the lower density of $A$. Contrary to the case of the upper density, if $A$ has lower density $\alpha $ then $\liminf_{n \to\infty} \frac{\log p_A(n)}{\log p(\alpha n)}$ cannot even be bounded
from above by an absolute constant. Again, this implies a negative answer to Nathanson’s question for all small enough $\alpha$.

\begin{theorem}\label{theolower} For every $0<\alpha < 1$ there is a set of integers $A$ of lower density $\alpha$ with $\gcd(A)=1$ satisfying
\begin{equation}\label{eqTheo3}
\liminf_{n \to\infty} \frac{\log p_A(n)}{\log p(\alpha n)} \geq (1-o_\alpha(1))\frac{\sqrt{6}}{\pi}\log(1/\alpha) \;.
\end{equation}
Furthermore, the above lower bound is best possible. Namely, any $A$ of lower density $\alpha$ satisfies
\begin{equation}\label{eqTheo4}
\liminf_{n \to \infty} \frac{\log p_A(n)}{\log p(\alpha n)} \leq (1+o_\alpha(1))\frac{\sqrt{6}}{\pi}\log(1/\alpha) \;.
\end{equation}
\end{theorem}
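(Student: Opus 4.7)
The theorem has two halves---the construction establishing \eqref{eqTheo3} and the extremal bound \eqref{eqTheo4}---and I sketch a plan for each.

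For the construction \eqref{eqTheo3}, the plan is to build $A=\bigcup_{k\ge 1}B_k$ as a union of blocks placed at rapidly growing scales $N_1\ll N_2\ll\cdots$, with each $B_k\subseteq(N_k,N_{k+1}]$. The blocks are to be chosen so that two things hold simultaneously. First, at every $n\in(N_k,N_{k+1}]$, the visible portion $A\cap[n]$ should contain a dense short substructure of size $m_n\sim 2\sqrt{\alpha n}$ (for instance a translated initial segment, or a short arithmetic progression of the right common difference): applying the standard estimate $\log p_{[1,m]}(n)\sim m\log(n/m^2)$, valid for $m\ll\sqrt n$, at $m=m_n$ yields $\log p_A(n)\ge(1-o_\alpha(1))\cdot 2\sqrt{\alpha n}\log(1/\alpha)$, which after dividing by $\log p(\alpha n)\sim\pi\sqrt{2\alpha n/3}$ is exactly the assertion of \eqref{eqTheo3}. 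Second, the cumulative count $|A\cap[N_{k+1}]|$ must be tuned to dip to $\alpha N_{k+1}$ at each scale, pinning the lower density at $\alpha$. Because the effective scale $m_n$ grows with $n$, blocks at many scales are required; fast growth of $N_k$ is what ensures that earlier blocks do not over-populate later scales and inflate the density past $\alpha$.

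For the extremal bound \eqref{eqTheo4}, I plan to combine two consequences of lower density $\alpha$: there is a sequence $n_k\to\infty$ with $|A\cap[n_k]|/n_k\to\alpha$, and $|A\cap[m]|/m\ge\alpha-o(1)$ for all large $m$. At each such $n_k$ one applies the standard saddle-point upper bound $\log p_A(n_k)\le\beta n_k+\sum_{a\in A\cap[n_k]}\bigl(-\log(1-e^{-a\beta})\bigr)$, estimating the sum by Abel summation on $N(m):=|A\cap[m]|$. A naive use of only $N(m)\le\min(m,(\alpha+o(1))n_k)$ recovers merely the trivial bound $\log p(n_k)$, i.e.\ the too-weak ratio $1/\sqrt\alpha$. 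To sharpen, I would use the key structural observation that the extremal configuration $A\cap[n_k]=[1,\alpha n_k]$ can hold for at most one $n_k$: two values $n_1<n_2$ with this property would force both $A\cap(\alpha n_1,n_1]=\emptyset$ and $A\supseteq[1,\alpha n_2]\supseteq(\alpha n_1,n_1]$, a contradiction. Hence for infinitely many $n_k$ the extremum is missed and one can extract a sub-sequence along which $N(m)/m$ is squeezed close to $\alpha$ across a long range $[\gamma n_k,n_k]$; feeding this into the Abel-summed bound and optimizing $\beta$ in the transitional regime where $\alpha n_k\beta$ is of order $\log(1/\alpha)$ should produce the sharp constant $\frac{\sqrt 6}{\pi}\log(1/\alpha)$, since the relevant dilogarithm-type terms $\log(1-e^{-\alpha n_k\beta})$ contribute exactly a $\log(1/\alpha)$ factor in this regime.

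The main obstacle for \eqref{eqTheo4} is the sub-sequence extraction: one needs not merely $|A\cap[n_k]|/n_k\to\alpha$ but uniform control of $N(m)/m$ on an intermediate range, so that the saddle-point can be optimized in the moderate-$\beta$ regime that produces the $\log(1/\alpha)$ factor rather than either trivial asymptotic. For \eqref{eqTheo3}, the delicate point is interleaving dense partition-enabling blocks with enough sparsity at the right scales to keep the lower density at exactly $\alpha$, rather than accidentally overshooting and losing the $\log(1/\alpha)$ gain over the Erd\H{o}s regime.
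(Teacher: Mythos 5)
Your construction for \eqref{eqTheo3} is in the same spirit as the paper's --- blocks $A_{i+1}=[f(i),\alpha f(i+1)]$ at doubly exponentially growing scales $f(i+1)=f(i)^2$, with short sub-intervals near the left endpoint of each block serving as the ``dense short substructure'' and the trivial estimate $\binom{n+k-1}{k-1}$ in place of the formula you quote for $\log p_{[1,m]}(n)$. But you have not addressed the hard transition regime: when $m$ lies within a constant factor of the dip scale $\alpha f(i+1)$, no single block of $A$ contains an interval of the right length and position, and the paper must split $m=\delta m+(1-\delta)m$, use the block $A_i$ (via a Szekeres-type bound) on the first piece, use $A_{i+1}$ (via a Dixmier--Nicolas-type bound) on the second piece, and then optimize $\delta$ at $\delta=1/\log(1/\alpha)$ to recover the loss. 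This two-block splitting with optimization over the split fraction is the technical core of the lower bound, and your sketch does not contain an analogue of it.

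For \eqref{eqTheo4} there is a genuine gap. Evaluating $p_A$ at the scale $n_k$ where $|A\cap[n_k]|/n_k\to\alpha$ cannot give the sharp constant. If $A\cap[n_k]=[1,\alpha n_k]$ (or merely approximates it) then $p_A(n_k)\approx p(n_k)$ for large $n_k$, since $[1,\alpha n_k]$ already contains every part size of order $\sqrt{n_k}\log n_k$; hence the saddle-point bound you write, optimized at $\beta\asymp 1/\sqrt{n_k}$, collapses to $\log p(n_k)$ and yields only the ratio $1/\sqrt\alpha$, not $\frac{\sqrt 6}{\pi}\log(1/\alpha)$. Your proposed repair---that the extremal configuration $A\cap[n_k]=[1,\alpha n_k]$ can occur at most once---is both incorrect as stated (the claimed contradiction requires $\alpha n_2\ge n_1$, i.e.\ $n_2\ge n_1/\alpha$, which need not hold) and insufficient (near-extremal configurations still give a ratio near $1/\sqrt\alpha$). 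The key idea you are missing, and the one the paper uses, is to evaluate $p_A$ not at $n_k$ but at the much larger scale $m_k=\alpha n_k^2$. At that scale $\sqrt{m_k}=\sqrt\alpha\,n_k$, so the $\alpha n_k$ available parts of $A$ below $n_k$ all have size $\le n_k=\sqrt{m_k}/\sqrt\alpha$ and after compression (Lemma \ref{Lemma - the first part is always the best}) fit inside $[1,\sqrt{\alpha m_k}]$, putting the first factor in the genuinely restricted Szekeres regime; while all parts of $A$ between $n_k$ and $m_k$ are $>\sqrt{m_k/\alpha}$, putting the second factor in the Dixmier--Nicolas ``no small parts'' regime. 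Each factor contributes $(\log(1/\alpha)+O(\log\log(1/\alpha)))\sqrt{\alpha m_k}$ and the product gives the $2\log(1/\alpha)$ constant. No uniform control of $N(m)/m$ on an intermediate range is needed; the single data point $|A\cap[n_k]|=\alpha n_k$ together with compression suffices.
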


\paragraph{Proof and paper overview:}

The proof of Theorem \ref{theolower} appears in Section \ref{secproof 2} and the proof of Theorem \ref{theoupper} appears in Section \ref{secproof}.
All the proofs in this paper are elementary in the number theoretic sense \cite{Nat2000c}, that is, they rely on combinatorial/counting arguments and do not use complex analysis which is frequently used when studying partition functions.
We find it quite surprising that such elementary methods can yield the precise results stated in Theorems \ref{theoupper} and \ref{theolower}.
The results that are the most challenging to prove are those stated in (\ref{eqTheo1}) and (\ref{eqTheo3}). In both cases, the constructions of the sets $A$ are quite simple and rely on the following finitary intuition: if one has to choose a subset $A \subseteq [n]$ of size $\alpha n$ so as to maximize $p_A(m)$, one would choose $S=\{1,\ldots,\alpha n\}$, since small integers give more ``freedom''\footnote{See Lemma \ref{Lemma - the first part is always the best} where this intuition is formalized.}. Similarly, taking $S=\{n-\alpha n,\ldots,n\}$ would minimize $p_A(m)$.
The constructions of $A$ in both proofs are then an infinite variant of this finitary intuition.
While the constructions of the sets $A$ are simple, their analysis is quite
involved, relying among other things, on special cases of the results of Szekeres \cite{Szekeres1,Szekeres2} and Diximier--Nicolas \cite{DN} mentioned above. While the original proofs of these two results were highly non-elementary, we will provide short and self-contained proofs of the special cases we need in this paper, see Lemmas \ref{Theorem - szekeres} and \ref{Theorem - France}. The latter proof might be of independent interest.

\section{Proof of Theorem \ref{theolower}}\label{secproof 2}

We start with the proof of Theorem \ref{theolower} equation \eqref{eqTheo3}. To this end, we will first consider the ``easy’’ cases, handled by Lemma \ref{Lemma - main lemma liminf lower bound} below,
and then move on to consider the harder cases, which will be dealt with in the proof itself later on. This proof will require a certain amount of preparation which will be given after the proof of Lemma \ref{Lemma - main lemma liminf lower bound}.

In the next proof, as well as in the rest of this section, we will frequently use the basic inequalities $\left(\frac{n}{k}\right)^{k}\leq \binom{n}{k}\leq \left(\frac{en}{k}\right)^k$ and $(n/e)^{n}\leq n!\leq en(n/e)^n$.
Furthermore, throughout the paper we will frequently use the fact that there are $\binom{n+k-1}{k-1}$ solutions in nonnegative integers to the equation $\sum_{i=1}^{k}x_i=n$.

\begin{lemma}\label{Lemma - main lemma liminf lower bound}
For every $0< \alpha < 1$ there exists $n_1=n_1(\alpha)$ such that the following holds for every integer $n>n_1$. Letting $A=\{1\}\cup [n,\alpha n^2]$ we have the following for all $m\in [16\alpha n^2,\alpha n^4/16]$
\begin{equation}\label{lemma 3.1}
        \log p_{A}(m) \geq (2\log(1/\alpha)-8)\sqrt{\alpha m}\;.
\end{equation}
\end{lemma}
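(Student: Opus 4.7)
The plan is to lower bound $p_A(m)$ by counting partitions of $m$ of the following shape: pick a multiset of $k$ parts from $[n,\alpha n^2]\subseteq A$ whose sum is at most $m$, then fill the remainder with $1$'s (valid since $1\in A$). Distinct multisets give distinct partitions, so the task reduces to choosing $k$ as a function of $m$. I first note that if $\alpha\ge e^{-4}$ then $(2\log(1/\alpha)-8)\sqrt{\alpha m}\le 0$ and the bound is immediate; assume therefore $\alpha<e^{-4}$, and absorb the requirement $n\ge 2/\alpha$ (equivalently, $L:=\alpha n^2\ge 2n$) into $n_1(\alpha)$. I then split $m$ at the threshold $4\alpha^3 n^4$.

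For small $m$ (i.e., $m\in[16\alpha n^2,4\alpha^3 n^4]$) I set $k:=\lceil 2\sqrt{\alpha m}\,\rceil$ and restrict the parts to $[n,\lfloor m/k\rfloor]$. The hypotheses on $m$ force $m/k=\sqrt{m/(4\alpha)}\in[2n,L]$, so this interval lies in $A$, and any multiset of size $k$ from it automatically has sum $\le m$. The standard bound $\binom{N+k}{k}\ge (N/k)^k$, applied with $N=m/k-n\ge m/(2k)$, yields at least $(1/(8\alpha))^k$ such multisets, and hence $\log p_A(m)\ge 2\sqrt{\alpha m}\bigl(\log(1/\alpha)-3\log 2\bigr)\ge(2\log(1/\alpha)-8)\sqrt{\alpha m}$, the last step using $6\log 2<8$.

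For large $m$ (i.e., $m\in(4\alpha^3 n^4,\alpha n^4/16]$) the previous choice of parts would exceed $L$, so I instead set $k:=\lfloor m/L\rfloor$ and allow parts from the full interval $[n,L]$. Every multiset of size $k$ then has sum $\le kL\le m$, and $L\ge 2n$ gives at least $\binom{L/2+k}{k}$ multisets. The naive bound $(L/(2k))^k$ is useless at the transition $k=L/2$, so I instead use the sharper estimate $\binom{a+b}{a}\ge ((a+b)/\min(a,b))^{\min(a,b)}$, giving $k\log(1+L/(2k))$ when $k\le L/2$ and $(L/2)\log(1+2k/L)$ when $k>L/2$. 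Reparameterizing with $x:=m/L^2\in[4\alpha,1/(16\alpha)]$, the ratio $\log p_A(m)/\sqrt{\alpha m}$ becomes $\sqrt{x/\alpha}\log(1+1/(2x))$ on $[4\alpha,1/2]$ and $\log(1+2x)/(2\sqrt{\alpha x})$ on $[1/2,1/(16\alpha)]$; a derivative check shows each piece is unimodal, so the minimum on the whole range is attained at one of the endpoints $x\in\{4\alpha,\,1/2,\,1/(16\alpha)\}$. At the extreme endpoints the ratio equals $2\log(1+1/(8\alpha))\ge 2\log(1/\alpha)-6\log 2$, and at the junction $x=1/2$ it equals $\log 2/\sqrt{2\alpha}$; the elementary inequality $\log 2/\sqrt{2\alpha}\ge 2\log(1/\alpha)-8$, valid for every $\alpha\in(0,1]$, completes the proof.

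The main obstacle is the point $x=1/2$, where all elementary binomial bounds of the form $(\cdot)^{k}$ or $(\cdot)^{L/2}$ collapse unless one retains the additive ``$+1$'' inside the base. The surviving bound yields $\log 2/\sqrt{2\alpha}$, which is of shape $1/\sqrt{\alpha}$ rather than $\log(1/\alpha)$; proving that $1/\sqrt{\alpha}$ still dominates $2\log(1/\alpha)-8$ for all $\alpha\in(0,1]$ reduces to a one-variable calculus check whose minimum turns out to be strictly positive. This transition is precisely what dictates the constant $-8$ in the statement of the lemma; a sharper constant would require a more refined treatment at this junction.
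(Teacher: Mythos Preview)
Your argument is correct. The overall scheme---pick a multiset of $k$ parts from a subinterval of $[n,\alpha n^2]$ and fill the remainder with $1$'s---is exactly the paper's, and your small-$m$ case with $k=\lceil 2\sqrt{\alpha m}\rceil$ and parts in $[n,m/k]$ is essentially the paper's Case~1 (the paper uses the slightly shorter interval $[n,n+\sqrt{m/16\alpha}]$, but the resulting bound $(1/(8\alpha))^{2\sqrt{\alpha m}}$ is the same).

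The difference is in the large-$m$ regime. The paper's Case~2 (for $m\in[n^2/(4\alpha),\alpha n^4/16]$) simply swaps the roles of ``number of parts'' and ``interval width'': it takes $k=\sqrt{m/(16\alpha)}$ parts from the short interval $[n,n+2\sqrt{\alpha m}]$, so that the relevant binomial is again $\binom{2\sqrt{\alpha m}+\sqrt{m/16\alpha}}{2\sqrt{\alpha m}}\ge(1/(8\alpha))^{2\sqrt{\alpha m}}$, identical to Case~1. No calculus is needed, and the whole lemma is obtained with the uniform constant $6\log 2$ (which the paper then relaxes to $8$). Your choice $k=\lfloor m/L\rfloor$ with parts from the full interval $[n,L]$ is less well-tuned and forces the extra work: the unimodality analysis in $x=m/L^2$, the three-point check, and the verification that $\log 2/\sqrt{2\alpha}\ge 2\log(1/\alpha)-8$ at the junction $x=1/2$. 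This last inequality genuinely fails if $8$ is replaced by $6\log 2$ (the minimum of $\log 2/\sqrt{2\alpha}-2\log(1/\alpha)$ is about $-4.4$), so your remark that the junction ``dictates the constant $-8$'' is an artifact of your particular choice of $k$; the paper's symmetric choice shows that any constant exceeding $6\log 2$ suffices.
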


\begin{proof}
Let $0<\alpha <1$ and let $n$ be a positive integer with $n>n_1=n_1(\alpha)$ which will be specified later. Let $m\in[16 \alpha n^2,\alpha n^4]$. The proof splits into two cases depending on the value of $m$.
\begin{casesp}
\item
Assume $16 \alpha n^2\leq m \leq 4\alpha^3 n^4$. Our assumption in this case implies the following two inequalities:
\begin{align}
    2{\sqrt{\alpha m}}\left(n+{\sqrt{m/16\alpha}}\right) \leq m\;,\label{eq1 - lower density}\\
    \sqrt{m/16\alpha}\leq \alpha n^2/2\;.\label{eq2 - lower density}
\end{align}
Therefore, provided $n_1\geq 2/\alpha$ we can deduce from \eqref{eq2 - lower density} that
\begin{equation*}
    n+\sqrt{m/16\alpha}\leq \alpha n^2\;.
\end{equation*}
Setting $B=\{1\}\cup [n,n+\sqrt{m/16\alpha}]$ we infer that,
\[
    B \subseteq\{1\} \cup \left[ n,\alpha n^2\right]\subseteq A\;,
\]
implying that it is enough to prove a lower bound with respect to $p_B(m)$ in \eqref{lemma 3.1}.
We map each solution in nonnegative integers of the equation
\begin{equation} \label{equation - lower bound first equation}
    \sum _{k=0}^{\sqrt{ m/16\alpha}}x_k=2\sqrt{\alpha m}
\end{equation}
to a partition of $m$ with parts in $B$ as follows: if $(x_k)_{k=0}^{\sqrt{m/16\alpha}}$ is a solution in nonnegative integers of \eqref{equation - lower bound first equation} then for every $k$ we take the integer $n+k$ exactly $x_k$ times and finally take $1$ exactly $ m-\sum _{k=0}^{\sqrt{ m/16\alpha}}x_k=2\sqrt{\alpha m}$ times. This map is well defined as by \eqref{eq1 - lower density} and \eqref{equation - lower bound first equation} we have,
\[
    \sum _{k=0}^{\sqrt{m/16\alpha}}x_k (n+k)\leq(n+\sqrt{m/16\alpha})\sum_{k=0}^{\sqrt{m/16\alpha}}x_k=2{\sqrt{\alpha m}}(n+\sqrt{m/16\alpha})\leq m\;.
\]
Moreover, provided $n_1\geq 2$ the above map is an injection as $1\not \in [n,n+\sqrt{m/16\alpha}]$.
Combining the above observations we have
\begin{align*}
     p_{A}(m)&\geq p_{B}(m)\geq \binom{2\sqrt{\alpha m}+\sqrt{m/16\alpha}}{2\sqrt{\alpha m}-1}
     \geq \binom{\sqrt{m/16\alpha}}{2\sqrt{\alpha m}-1}\\
     &\geq \left(\frac{\sqrt{m/16\alpha}}{2\sqrt{\alpha m}-1}\right)^{2\sqrt{\alpha m}-1} \geq \left(\frac{1}{8\alpha }\right)^{2\sqrt{\alpha m}-1} \\
     &\geq \exp\left((2\log(1/\alpha)-8)\sqrt{\alpha m} \right)\;,
\end{align*}
where that last inequity holds provided $ n\geq 1/4\sqrt{\alpha}$.
\item
Assume $n^2/4\alpha \leq m \leq \alpha n^4/16$. Note that provided $n_1\geq 4/\alpha^2$ we have $n^2/4\alpha\leq 4\alpha ^3n^4$ for all $n>n_1$, hence Case 1 and Case 2 cover all $m\in[16\alpha n^2,\alpha n^4/16]$. Our assumption in this case implies the following two inequalities:
\begin{align}
    {\sqrt{m/16\alpha}}\left(n+{2\sqrt{\alpha m}}\right) \leq m\;,\label{eq3 - lower density}\\
    2{\sqrt{\alpha m}}\leq \alpha n^2/2\;.\label{eq4 - lower density}
\end{align}
Therefore, provided $n_1\geq 2/\alpha$ we can deduce from \eqref{eq4 - lower density} that
\begin{equation*}
    n+2\sqrt{\alpha m}\leq \alpha n^2\;.
\end{equation*}
Setting $B=\{1\}\cup [n,n+2\sqrt{\alpha m}]$ we deduce that,
\[
    B\subseteq \{1\}\cup \left[ n+1,\alpha n^2\right]\subseteq A\;.
\]
Similar to the first case, we may thus prove a lower bound for $p_B(m)$ in \eqref{lemma 3.1}.
We map each solution in nonnegative integers of the equation
\begin{equation}\label{equation - lower bound second equation}
    \sum _{k=0}^{2\sqrt{ \alpha m}}x_k=\sqrt{m/16\alpha}
\end{equation}
to a partition of $m$ with parts in $B$ as follows: if $(x_k)_{k=0}^{\sqrt{2\alpha m}}$ is a solution in nonnegative integers of \eqref{equation - lower bound second equation} then for every $k$ we take the integer $n+k$ exactly $x_k$ times and $1$ exactly $m-\sum _{k=0}^{2\sqrt{\alpha m}}x_k(n+k)$ times. This map is well defined as by \eqref{eq3 - lower density} and \eqref{equation - lower bound second equation} we have,
\[
    \sum _{k=0}^{2\sqrt{\alpha m}}x_k (n+k)\leq(n+2\sqrt{\alpha m}) \sum_{k=0}^{2\sqrt{\alpha m}}x_k=\sqrt{m/16\alpha}(n+2\sqrt{\alpha m})\leq m\;.
\]
Moreover, provided $n_1\geq 2$ this map is an injection as $1\not \in [n,n+\sqrt{2\alpha m}]$. Combining the above observations we have,
\begin{align*}
     p_{A}(m)&\geq p_{B}(m)\geq \binom{2\sqrt{\alpha m}+\sqrt{m/16\alpha}}{2\sqrt{\alpha m}}\geq \binom{\sqrt{m/16\alpha}}{2\sqrt{\alpha m}}\\
     &\geq \left(\frac{\sqrt{m/16\alpha}}{2\sqrt{\alpha m}}\right)^{2\sqrt{\alpha m}}= \left(\frac{1}{8\alpha}\right)^{2\sqrt{\alpha m}}
     \geq \exp\left((2\log(1/\alpha)-8)\sqrt{\alpha m}\right)\;. \qedhere
\end{align*}
\end{casesp}
\end{proof}

We will now prove several claims and lemmas which will be used in the proof of Theorem \ref{theolower} equation \eqref{eqTheo3}.
We start with the following very crude bound which will suffice for our purposes.

\begin{claim}\label{Claim - trivial bound}
    Suppose $n$ is positive integer and $A$ is a set of positive integers with $|A|=k$. Then
    \[
        p_{A}(n)\leq (n+1)^k\;.
    \]
\end{claim}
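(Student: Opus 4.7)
The claim is a very crude counting bound, so I would prove it by the most direct route: counting multiplicity vectors.

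First I would enumerate the elements of $A$ as $a_1 < a_2 < \cdots < a_k$ and observe that every partition of $n$ with parts in $A$ is uniquely determined by the tuple of multiplicities $(m_1, m_2, \ldots, m_k)$, where $m_i$ is the number of times the part $a_i$ appears. By definition, such a tuple corresponds to a partition of $n$ if and only if $\sum_{i=1}^k m_i a_i = n$, with each $m_i$ a nonnegative integer.

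The key step is the trivial bound on the range of each $m_i$. Since $a_i \geq 1$, the identity $\sum m_i a_i = n$ forces $m_i \leq n$, so each $m_i$ takes one of the $n+1$ values in $\{0, 1, \ldots, n\}$. Therefore the number of valid multiplicity tuples, and hence $p_A(n)$, is at most $(n+1)^k$.

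This is really a one-line argument with no obstacle; the only care needed is to note that the map from partitions to multiplicity tuples is an injection (it is in fact a bijection onto the set of solutions of $\sum m_i a_i = n$), which follows immediately from the fact that a partition is determined by how many times each distinct part appears.
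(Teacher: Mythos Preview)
Your argument is correct and is exactly the paper's proof: each element of $A$ can appear at most $n$ times in a partition of $n$, so the multiplicity vector lies in $\{0,1,\ldots,n\}^k$, giving at most $(n+1)^k$ partitions. The paper states this in one sentence; you have simply spelled out the same reasoning in more detail.
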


\begin{proof}
Since each partition of $n$ using integers from $A$ can contain each of these integers at most $n$ times, the number
of such partitions is clearly at most $(n+1)^k$.
\end{proof}

For positive integers $k,n$ we define $p_k(n)$ to be the number of ways to write $n$ as a sum of {\em exactly} $k$ nonnegative integers (without consideration of the ordering of the summands). The following three lemmas are folklore, and are proved here for the sake of completeness.

\begin{lemma}\label{Lemma - estimation for p_k}
Suppose $k,n$ are positive integers. Then,
\[
   \binom{n-1}{k-1}\leq k! \cdot p_k(n)\leq \binom{n+\binom{k}{2}-1}{k-1}\;.
\]
\end{lemma}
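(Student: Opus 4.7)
The plan is to exploit the standard link between partitions (unordered) and compositions (ordered), together with the classical shift $b_i = a_i + (i-1)$ that turns a weakly increasing sequence into a strictly increasing one.

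For the lower bound, I would observe that each partition of $n$ into exactly $k$ positive parts, written as $a_1 \le a_2 \le \cdots \le a_k$, corresponds to \emph{at most} $k!$ distinct compositions of $n$ into $k$ positive parts (obtained by reordering the $a_i$'s; with equality precisely when all parts are distinct). Since the total number of compositions of $n$ into $k$ positive parts is exactly $\binom{n-1}{k-1}$ by the standard stars-and-bars identity, summing over all partitions gives
\[
    k! \cdot p_k(n) \;\ge\; \binom{n-1}{k-1}\;.
\]

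For the upper bound, I would use the shift $b_i := a_i + (i-1)$ applied to the sorted representation $a_1 \le a_2 \le \cdots \le a_k$ of each partition. Then $b_1 \ge 1$, the inequalities $b_i - b_{i-1} = (a_i - a_{i-1}) + 1 \ge 1$ show the $b_i$'s are strictly increasing, and $\sum_i b_i = n + \binom{k}{2} =: N$. Thus $p_k(n)$ equals the number of strictly increasing sequences $b_1 < b_2 < \cdots < b_k$ of positive integers summing to $N$. Because all the $b_i$'s are now distinct, every such sequence gives rise to \emph{exactly} $k!$ orderings, so $k! \cdot p_k(n)$ is the number of ordered $k$-tuples of distinct positive integers summing to $N$. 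This is bounded above by the number of all compositions of $N$ into $k$ positive parts, which is $\binom{N-1}{k-1} = \binom{n + \binom{k}{2} - 1}{k-1}$.

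There is essentially no obstacle here: both directions are a one-line counting argument once the shift bijection is in place. The only thing to be careful about is that the lower bound uses the trivial ``at most $k!$ orderings'' count (rather than exactly $k!$), while the upper bound crucially converts to \emph{distinct} parts first so that the count becomes exactly $k!$ per partition — and it is this conversion that produces the extra $\binom{k}{2}$ inside the target binomial.
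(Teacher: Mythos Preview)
Your proof is correct and follows essentially the same approach as the paper's own proof: the lower bound via the trivial ``at most $k!$ orderings'' count on compositions of $n$ into $k$ positive parts, and the upper bound via the shift $a_i \mapsto a_i + (i-1)$ to obtain distinct parts summing to $n+\binom{k}{2}$, then bounding by all compositions of that new total.
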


\begin{proof}
There are exactly $\binom{n-1}{k-1}$ {\em ordered} partitions of $n$ with $k$ positive parts. This implies the first inequality. To see the second, suppose $y_1 \leq y_2 \leq \ldots \leq y_k$ satisfy $\sum_{i=1}^{k}y_i=n$. Defining $x_i=y_i+i-1$ for all $i$, we have $\sum_{i=1}^{k}x_i=n+\binom{k}{2}$. As all $x_i$ are distinct, each permutation of $x_i$s give rise to a different \emph{ordered} solution to the equation $\sum_{i=1}^{k}z_i=n+\binom{k}{2}$ with nonnegative integers. This implies the second inequality.
\end{proof}

In the following lemma, as well as in the rest of the section, we use several times the notation $p_{[k]}(n)$. For clarity, we wish to emphasize that $p_{[k]}(n)$ stands for $p_{A}(n)$ where $A=[k]$.

\begin{lemma}\label{Lemma - at most k parts}
Suppose $k,n$ are positive integers. Then,
\[
    p_{[k]}(n)=p_{k}(n+k)\;.
\]
\end{lemma}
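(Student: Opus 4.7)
The plan is to give a bijective proof by identifying both sides with the same combinatorial quantity, namely the number of partitions of $n$ into at most $k$ positive parts. Once that identification is in place, the lemma follows immediately.

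For the left-hand side $p_{[k]}(n)$, I will invoke standard Young diagram conjugation. Given a partition $\lambda_1\geq\lambda_2\geq\cdots\geq\lambda_j$ of $n$ with each $\lambda_i\in[k]$, its conjugate partition is $\lambda'_s=|\{i:\lambda_i\geq s\}|$; the constraint $\lambda_i\leq k$ is equivalent to $\lambda'_s=0$ for $s>k$. Since conjugation is an involution, this produces a bijection between partitions of $n$ with parts in $[k]$ and partitions of $n$ with at most $k$ positive parts, so $p_{[k]}(n)$ equals the latter count.

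For the right-hand side $p_k(n+k)$, I will use the shift $z_i = y_i - 1$. A weakly ordered sequence $1\leq y_1\leq\cdots\leq y_k$ summing to $n+k$ corresponds uniquely to a weakly ordered sequence $0\leq z_1\leq\cdots\leq z_k$ summing to $n$; discarding the leading zeros of $(z_i)$ yields a partition of $n$ with at most $k$ positive parts. The map is manifestly reversible (pad with the required number of zeros, then add $1$ to each entry), so $p_k(n+k)$ equals the same count.

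Combining the two identifications yields $p_{[k]}(n)=p_k(n+k)$. No serious obstacle arises; the only subtlety is the bookkeeping between ``exactly $k$ summands'' and ``at most $k$ summands'', which is exactly what the shift by $1$ handles.
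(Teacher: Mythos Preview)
Your proof is correct and follows essentially the same route as the paper: both arguments identify each side with the number of partitions of $n$ into at most $k$ positive parts, using conjugation for $p_{[k]}(n)$ (the paper simply cites this as ``well known'') and the shift $y_i\mapsto y_i-1$ for $p_k(n+k)$. The only difference is that you spell out the conjugation bijection explicitly, which is a harmless elaboration.
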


\begin{proof}
As it is well known, $p_{[k]}(n)$ is also the number of ways to write $n$ as a sum of at most $k$ integers. Let $(y_i)_{i=1}^{k}$ be a partition of $n+k$. Setting $x_i=y_i-1$ we obtain a partition of $n$ with at most $k$ parts. This process is invertible and therefore we obtain the assertion of the lemma.
\end{proof}

\begin{lemma}\label{Lemma - the first part is always the best}
Suppose $A$ is a set of positive integers with $|A|= k$. Then,
\[
    p_{A}(n)\leq p_{[k]}(n)\;.
\]
\end{lemma}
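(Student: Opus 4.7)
The plan is to construct an explicit injection from partitions of $n$ with parts in $A$ to partitions of $n$ with parts in $[k]$; this would yield the inequality at once. The guiding idea matches the paper's own ``finitary intuition'': smaller parts give more freedom, so one should be able to systematically replace each element of $A$ by a smaller part in $[k]$ while preserving the total sum. Writing $A = \{a_1 < a_2 < \cdots < a_k\}$, the key structural fact I would use is the chain of inequalities $a_i \geq i$, which simply reflects the fact that the $a_i$ are distinct positive integers; this provides precisely the slack needed to replace the part $a_i$ by the smaller part $i$.

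Concretely, I would identify a partition of $n$ with parts in $A$ with a tuple $(x_1, \ldots, x_k) \in \mathbb{Z}_{\geq 0}^k$ satisfying $\sum_{i=1}^k x_i a_i = n$ (where $x_i$ is the multiplicity of $a_i$), and similarly a partition of $n$ with parts in $[k]$ with a tuple $(y_1, \ldots, y_k) \in \mathbb{Z}_{\geq 0}^k$ satisfying $\sum_{i=1}^k y_i \cdot i = n$. The map $\Phi$ I would define sets $y_i = x_i$ for $i \geq 2$ and
\[
    y_1 = x_1 + \sum_{i=1}^k x_i(a_i - i)\;,
\]
which is a nonnegative integer since $a_i \geq i$. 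Checking that $\sum_i y_i \cdot i = n$ is then a one-line computation: $\sum_i y_i \cdot i = \sum_i x_i \cdot i + \sum_i x_i(a_i - i) = \sum_i x_i a_i = n$. Informally, $\Phi$ simply rewrites each occurrence of the part $a_i$ as one copy of the part $i$ together with $a_i - i$ extra copies of the part $1$.

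The only step that takes any real thought is injectivity. Given an image $(y_1, \ldots, y_k)$, the coordinates $x_i = y_i$ for $i \geq 2$ are recovered immediately, and substituting back gives $y_1 = x_1 a_1 + \sum_{i \geq 2} y_i(a_i - i)$, which uniquely determines $x_1$. The main (albeit minor) obstacle is therefore the design choice of concentrating \emph{all} the ``slack'' in the single coordinate $y_1$ rather than distributing it across the coordinates $y_1, \ldots, y_k$: this concentration is precisely what makes the map injective. Once this is in place, $\Phi$ is an injection from partitions of $n$ with parts in $A$ to partitions of $n$ with parts in $[k]$, yielding $p_A(n) \leq p_{[k]}(n)$ as required.
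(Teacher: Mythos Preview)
Your proof is correct and is essentially the same as the paper's: both replace each occurrence of the part $a_i$ by one copy of the part $i$ and absorb the resulting deficit into extra copies of $1$, then argue injectivity by recovering the multiplicities of $a_2,\ldots,a_k$ directly and deducing that of $a_1$ from the constraint that the total is $n$. Your multiplicity-vector formulation is arguably tidier than the paper's list-of-parts formulation, but the underlying map and argument are identical.
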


\begin{proof}
Let $f$ be the bijection between $A$ and $[k]$ defined by sending the $i^{th}$ largest integer of $A$ to $i$. We now define an injection $g$ between the partitions of $n$ with parts in $A$ and partitions of $n$ with parts in $[k]$. Given a partition $x=(x_i)_{i=0}^\ell$ of $n$ with parts in $A$, we define $g(x)$ to be the partition $y=(y_i)_{i=0}^{\ell+\ell'}$ where $\ell'=m-\sum_{i=0}^{\ell}f(x_i)$ and $y_i$ is defined to be $f(x_i)$ for all $0 \leq i\leq \ell$ and $1$ for all $\ell<i\leq \ell'$. To see that this is an injection let $x=(x_i)_{i=1}^{\ell_1},x'=(x')_{i=1}^{\ell_2}$ be two partitions of $n$ with all parts in $A$ and assume that $g(x)=g(x')$. Let $a$ the minimal integer in $A$. Since $f$ is a bijection each of the integers in $A$ besides $a$ must appear in $x$ and $x'$ the same number of times.
Since $\sum _{i=1}^{\ell_1}a_i=\sum _{i=1}^{\ell_2}b_i=n$, the integer $a$ appears the same number of times in $x$ and $x'$. This completes the proof.
\end{proof}

We now turn to prove the two key lemmas that will be used in the proof of (\ref{eqTheo3}).
The first is Lemma \ref{Theorem - szekeres} below.
We remark that this result can be derived (with some effort) from the more precise bound due to Szekeres \cite{Szekeres1,Szekeres2} (see also \cite{Canfield,Romik}), but the self contained elementary proof below is significantly simpler.

\begin{lemma}\label{Theorem - szekeres}
For every $0< \gamma < 1$ there exists $n_2=n_2(\gamma)$ such that for all $n>n_2$ we have
\[
\log p_{[\gamma \sqrt{n}]}(n) = (2\gamma \log(1/\gamma)+ \Theta(\gamma))\sqrt{n}\;.
\]
\end{lemma}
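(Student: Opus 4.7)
The plan is to reduce this to a direct estimate on a binomial divided by a factorial via the two preceding lemmas. Setting $k=\gamma\sqrt{n}$ (rounding if necessary), Lemma \ref{Lemma - at most k parts} rewrites $p_{[k]}(n)$ as $p_k(n+k)$, and Lemma \ref{Lemma - estimation for p_k} sandwiches this between $\binom{n+k-1}{k-1}/k!$ and $\binom{n+k+\binom{k}{2}-1}{k-1}/k!$. From here the only tools needed are the elementary estimates $(n/k)^k\leq\binom{n}{k}\leq(en/k)^k$ and $(k/e)^k\leq k!\leq ek\cdot(k/e)^k$ already in use in this section.

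For the upper bound, observe that $\binom{k}{2}\leq\gamma^2 n/2\leq n/2$, so $n+k+\binom{k}{2}-1\leq 2n$ for $n$ large. The upper binomial is then at most $(Cn/k)^{k}$ for an absolute constant $C$, and dividing by $k!\geq(k/e)^k$ gives $p_{[k]}(n)\leq (C'n/k^2)^{k}=(C'/\gamma^2)^{\gamma\sqrt{n}}$, whose logarithm is $2\gamma\log(1/\gamma)\sqrt{n}+O(\gamma)\sqrt{n}$. The crucial feature is that $n/k^2=1/\gamma^2$ is independent of $n$: the $\tfrac{1}{2}\log n$ contribution inside $\log(n/k)$ is exactly cancelled by the $\log k=\tfrac{1}{2}\log n+\log\gamma$ coming from the Stirling estimate of $k!$, and this cancellation is the entire engine of the proof.

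The lower bound is symmetric. One has $\binom{n+k-1}{k-1}\geq (n/(k-1))^{k-1}$ and $k!\leq ek\cdot(k/e)^k$, so after taking logs and performing the same cancellation one obtains $\log p_{[k]}(n)\geq\gamma\sqrt{n}\bigl(2\log(1/\gamma)+1\bigr)-O(\log n)$. For $n$ large enough relative to $1/\gamma$ (which is what determines $n_2$), the $O(\log n)$ correction is dwarfed by $\gamma\sqrt{n}$, yielding $(2\gamma\log(1/\gamma)+\Omega(\gamma))\sqrt{n}$. I do not foresee any serious obstacle beyond bookkeeping: one must check that the absolute constants hidden in $\Theta(\gamma)$ are strictly positive on both sides, which is immediate from the computation sketched above, and handle the floor on $\gamma\sqrt{n}$, which is harmless since $k\pm 1$ differs from $k$ multiplicatively by $1+o_n(1)$.
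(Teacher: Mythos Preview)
Your proposal is correct and follows essentially the same approach as the paper: both combine Lemmas~\ref{Lemma - at most k parts} and~\ref{Lemma - estimation for p_k} to sandwich $p_{[\gamma\sqrt{n}]}(n)$ between two binomial-over-factorial expressions, then apply the elementary bounds $(n/k)^k\le\binom{n}{k}\le(en/k)^k$ and Stirling-type estimates on $k!$, exploiting the cancellation $n/k^2=1/\gamma^2$. The paper's version differs only in that it tracks explicit constants (obtaining $4\gamma$ and $\gamma/2$ for the upper and lower $\Theta(\gamma)$ terms) rather than leaving them implicit.
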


\begin{proof}
Let $0< \gamma <1$ be a real number and let $n$ be an integer with $n>n_2=n_2(\gamma)$ where $n_2$ will be specified later. Setting $k=\gamma \sqrt{n}$, Lemmas \ref{Lemma - estimation for p_k} and \ref{Lemma - at most k parts} implies that

\begin{equation}\label{equation - bound on p_k}
        \binom{n+\gamma \sqrt{n}-1}{\gamma \sqrt{n}-1}\leq (\gamma \sqrt{n})!\cdot p_{[\gamma \sqrt{n}]}(n)\leq \binom{n+\gamma \sqrt{n}+\binom{\gamma \sqrt{n}}{2}-1}{\gamma \sqrt{n}-1}\;.
\end{equation}
Therefore we have,
\begin{align*}
    p_{[\gamma\sqrt{n}]}(n) &\leq \frac{ \binom{n+\gamma \sqrt{n}+\binom{\gamma\sqrt{n}}{2}-1}{\gamma\sqrt{n}-1}}{(\gamma\sqrt{n})!} \leq \frac{\binom{2n}{\gamma \sqrt{n}}}{(\gamma \sqrt{n})!}\\
    &\leq \left(\frac{2e^2}{\gamma^2}\right)^{ \gamma\sqrt{n}} \leq e^{(2\gamma\log(1/\gamma)+4\gamma)\sqrt{n}}\;,
\end{align*}
where the second inequality holds provided $\gamma\sqrt{n_2}+\binom{\gamma \sqrt{n_2}}{2}-1\leq n_2$.
As to the lower bound we have,
\begin{align*}
    p_{[\gamma \sqrt{n}]}(n)&\geq \frac{\binom{n+\gamma \sqrt{n}-1}{\gamma \sqrt{n}-1}}{(\gamma \sqrt{n})!}\geq\left(\frac{\sqrt{n}}{\gamma}\right)^{\gamma \sqrt{n}-1}\left(\frac{e}{\gamma \sqrt{n}}\right)^{\gamma \sqrt{n}}\frac{1}{e\gamma \sqrt{n}}\\
                    &= \left(\frac{e}{\gamma^2}\right)^{\gamma \sqrt{n}}\frac{1 }{en}=e^{(2\gamma \log(1/\gamma)+\gamma)\sqrt{n}-\log(en)}\geq e^{(2\gamma \log(1/\gamma)+\gamma/2)\sqrt{n}}\;,
\end{align*}
where the second inequality holds provided $n_2>1/\gamma ^2$, and the third inequality holds provided $\log(en_2)\leq \gamma \sqrt{n_2}/2$.
\end{proof}

The second key lemma we will need is Lemma \ref{Theorem - France} below.
We remark that \eqref{Thm - a result of [2]} below is Theorem 2.6 in \cite{DN}, see also \cite{Mosaki} for a refined version of this result.
We give a self contained elementary proof of \eqref{Thm - a result of [2]} which is significantly simpler and also allows us to derive the stronger statement stated after \eqref{Thm - a result of [2]}.

\begin{lemma}\label{Theorem - France}
There exists a positive real $\lambda_0$ such that for every $\lambda \geq \lambda_0$ there exists $n_3=n_3(\lambda)$ such that for every integer $n>n_3$ we have
\begin{equation} \label{Thm - a result of [2]}
    \log p_{[\lambda \sqrt{n},n]}(n)=\left(\frac{2\log(\lambda) \pm \Theta(\log\log(\lambda))}{\lambda}\right)\sqrt{n}\;.
\end{equation}
Furthermore, for every positive real $\varepsilon\leq 1$ the lower bound holds also for $\log p_{[\lambda\sqrt{n},\varepsilon n]}(n)$ provided $\lambda \geq \lambda_0(\varepsilon)$ and $n\geq n_3(\lambda,\varepsilon)$.
\end{lemma}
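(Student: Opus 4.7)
The key reduction is that every partition counted by $p_{[\lambda\sqrt{n},n]}(n)$ has at most $k_0:=\lfloor\sqrt{n}/\lambda\rfloor$ parts, since each part is $\geq\lambda\sqrt{n}$. Letting $p^{\geq\lambda\sqrt n}_k(n)$ denote the number of partitions into exactly $k$ parts of size $\geq\lceil\lambda\sqrt n\rceil$, the shift $x_i\mapsto x_i-\lceil\lambda\sqrt n\rceil+1$ gives the identity $p^{\geq\lambda\sqrt n}_k(n)=p_k(N_k)$ where $N_k:=n-k\lceil\lambda\sqrt n\rceil+k$. Lemma \ref{Lemma - estimation for p_k} then sandwiches $p_k(N_k)$ between explicit binomial expressions, so the entire problem reduces to optimizing these expressions over $k\in[1,k_0]$.

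For the upper bound, write $k=c\sqrt n/\lambda$ with $c\in(0,1]$ and apply the upper half of Lemma \ref{Lemma - estimation for p_k}. The correction $\binom{k}{2}\leq n/(2\lambda^2)$ is negligible compared to $N_k\approx(1-c)n$ as long as $1-c\geq 1/\lambda$; the sliver $1-c<1/\lambda$ is handled separately and trivially yields at most $(\sqrt n/\lambda)(\log\lambda+O(1))$, which is sub-optimal. In the main regime, Stirling gives
\[
\log\frac{\binom{N_k+\binom k2-1}{k-1}}{k!} \leq \frac{\sqrt n}{\lambda}\cdot h(c)+O(\log n),\qquad h(c):=c\bigl(2\log\lambda+\log(1-c)-2\log c+O(1)\bigr).
\]
Maximizing $h$: the critical equation $h'(c)=0$ rearranges to $2\log c-\log(1-c)+c/(1-c)=2\log\lambda+O(1)$, whose solution satisfies $1-c=\Theta(1/\log\lambda)$, so $h^*=2\log\lambda-\log\log\lambda+O(1)$. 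Summing over the $k_0$ values of $k$ multiplies the bound by $\sqrt n/\lambda$, contributing only $O(\log n)$ in the exponent, negligible compared to $(\sqrt n/\lambda)\cdot 2\log\lambda$ for $n\geq n_3(\lambda)$. This yields the claimed upper bound.

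For the lower bound, it suffices to take a single $k^*=\lfloor(1-\delta)\sqrt n/\lambda\rfloor$ with $\delta:=1/(2\log\lambda)$, for which $N_{k^*}\approx\delta n$. The lower half of Lemma \ref{Lemma - estimation for p_k} and Stirling give
\[
\log p_{[\lambda\sqrt n,n]}(n)\geq \log\frac{\binom{N_{k^*}-1}{k^*-1}}{k^*!}\geq\frac{\sqrt n}{\lambda}\bigl(2\log\lambda-\log\log\lambda-O(1)\bigr),
\]
matching the statement. For the \emph{furthermore} clause the same $k^*$ works: if some part in a partition counted by $p^{\geq\lambda\sqrt n}_{k^*}(n)$ exceeded $\varepsilon n$, the remaining $k^*-1$ parts (each $\geq\lambda\sqrt n$) would sum to less than $(1-\varepsilon)n$, forcing $(k^*-1)\lambda\sqrt n<(1-\varepsilon)n$; substituting $k^*-1\geq(1-\delta)\sqrt n/\lambda-2$ rearranges to $\varepsilon-\delta<2\lambda/\sqrt n$, which is impossible once $\delta<\varepsilon/2$ (valid for $\lambda\geq\lambda_0(\varepsilon):=e^{2/\varepsilon}$) and $n\geq(4\lambda/\varepsilon)^2$. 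Hence $p_{[\lambda\sqrt n,\varepsilon n]}(n)\geq p^{\geq\lambda\sqrt n}_{k^*}(n)$ and the same lower bound holds in the restricted range.

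The main subtle step is the optimization of $h$: one must discover that the maximum is attained at $c$ very close to $1$, with $1-c$ of order $1/\log\lambda$---any $c$ bounded away from $1$ gives only $\log\lambda+O(1)$ instead of $2\log\lambda$, while taking $c$ too close to $1$ crushes the $\log(1-c)$ term. This balance is precisely what produces the $\log\log\lambda$ correction in the theorem statement; all remaining steps are routine applications of Stirling and of the lemmas already in hand.
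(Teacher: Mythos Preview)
Your argument is correct. The lower bound is essentially the paper's: both take a single value of $k$ close to $(1-\Theta(1/\log\lambda))\sqrt{n}/\lambda$, shift the parts down by $\lfloor\lambda\sqrt{n}\rfloor$, and invoke Lemma~\ref{Lemma - estimation for p_k}. Your treatment of the ``furthermore'' clause differs in presentation---you argue by contradiction that no part can exceed $\varepsilon n$, while the paper directly bounds the largest part produced by the shift---but the content is the same.

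The one genuine divergence is in the upper bound. You split $p_{[\lambda\sqrt n,n]}(n)$ according to the exact number of parts $k$, bound each $p_k(N_k)$ via Lemma~\ref{Lemma - estimation for p_k}, and then carry out a nontrivial optimization of $h(c)$ over $c=k\lambda/\sqrt n$, discovering the maximizer at $1-c\asymp 1/\log\lambda$. This is correct and even yields the sharper constant $2\log\lambda-\log\log\lambda+O(1)$, but it is considerably more work than the lemma requires. The paper bypasses all of this with a one-line observation: any partition counted by $p_{[\lambda\sqrt n,n]}(n)$ has at most $\sqrt n/\lambda$ parts, hence $p_{[\lambda\sqrt n,n]}(n)\le p_{[\sqrt n/\lambda]}(n)$, and a single application of Lemma~\ref{Lemma - estimation for p_k} (or equivalently Lemma~\ref{Theorem - szekeres}) with $k=\sqrt n/\lambda$ immediately gives $(2\log\lambda+O(1))\sqrt n/\lambda$. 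In other words, the paper passes to the conjugate partition once, while you stay in the ``exactly $k$ large parts'' picture and optimize over $k$; both reach the target, but the paper's route is shorter at the cost of a looser (though still adequate) constant.
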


\begin{proof}
Let $\lambda\geq \lambda_0$ be a real number where the value of $\lambda_0$ will be specified later and let $n$ be an integer with $n>n_3=n_3(\lambda)$ which will be specified later.
We first claim that
\begin{equation*}
    p_{[\lambda \sqrt{n},n]}(n)\leq p_{ [\sqrt{n}/\lambda] }(n)=p_{\sqrt{n}/\lambda}(n+\sqrt{n}/\lambda)\;.
\end{equation*}
To justify the inequality we observe that each partition of $n$ with all integers from $[\lambda \sqrt{n},n]$ uses at most $\sqrt{n}/\lambda$ integers.
As noted earlier it is well know that the number of partitions of $n$ with at most $\sqrt{n}/\lambda$ parts is precisely $p_{[\sqrt{n}/\lambda]}(n)$. Finally, the equality holds by Lemma \ref{Lemma - at most k parts}.
Applying Lemma \ref{Lemma - estimation for p_k} invoked with $n$ replaced by $n+\sqrt{n}/\lambda$ and $k=\sqrt{n}/\lambda $ we obtain,
\begin{align*}
   p_{[\lambda \sqrt{n},n]}(n)&\leq \frac{\binom{n+\sqrt{n}/\lambda +\binom{\sqrt{n}/\lambda}{2}-1}{\sqrt{n}/\lambda -1}}{(\sqrt{n}/\lambda)!}\leq  \frac{\binom{3n}{\sqrt{n}/\lambda}}{\left(\sqrt{n}/\lambda \right)!}\\
   &\leq \left ({3e\lambda\sqrt{n}}\right)^{\sqrt{n}/\lambda}\left(\frac{e\lambda }{\sqrt{n}}\right)^{\sqrt{n}/\lambda}
   = \left(3e^2\lambda^2\right)^{\sqrt{n}/\lambda}\leq e^{\left(\frac{2\log(\lambda)+4}{\lambda}\right)\sqrt{n}}\;,
\end{align*}
where the second inequality holds provided $\sqrt{n_3}/\lambda \leq n_3$, $\sqrt{n_3}/\lambda -1\leq 3 n_3/2$ and provided $\lambda_0>1$ and as $\binom{\sqrt{n}/\lambda}{2}-1\leq n/\lambda^2$.
This concludes the proof of the upper bound of \eqref{Thm - a result of [2]}.

For the lower bound of \eqref{Thm - a result of [2]} we claim that for any positive integer $k$ we have
\begin{equation}\label{equation - integers}
    p_{[\lambda \sqrt{n},n]}(n)\geq p_{k}(n-k\cdot \lfloor\lambda\sqrt{n}\rfloor)\;.
\end{equation}
To see this we define the following one to one correspondence. For every partition of $n-k\cdot \lfloor\lambda\sqrt{n}\rfloor$ with positive integers $(x_i)_{i=1}^{k}$ we define $(y_i)_{i=1}^{k}$ with $y_i=x_i+\lfloor\lambda \sqrt{n}\rfloor$. This is clearly a one to one correspondence and furthermore $(y_i)_{i=1}^{k}$ is partition of $n$ with all parts taken from $[\lambda \sqrt{n},n]$. Setting

\begin{equation*}\label{equation - definition of k}
    k=\left\lfloor\frac{\sqrt{n}}{\lambda+\lambda/\log(\lambda) }\right\rfloor
\end{equation*}
in \eqref{equation - integers} and applying Lemma \ref{Lemma - estimation for p_k} we obtain,
\begin{align*}
    p_{[\lambda \sqrt{n},n]}(n)&\geq \frac{\binom{n-k\lfloor\lambda n\rfloor-1}{k-1}}{k!}\\
    &\geq {\binom{\frac{n}{\log(\lambda)+1}-1}{\frac{\sqrt{n}}{\lambda+\lambda/\log(\lambda)}-1}}\Bigg/{\left(\frac{\sqrt{n}}{\lambda+\lambda/\log(\lambda)}\right)!} \\
    &\geq {\binom{\frac{n}{2(\log(\lambda)+1)}}{\frac{\sqrt{n}}{\lambda+\lambda/\log(\lambda)}-1}}\Bigg/{\left(\frac{\sqrt{n}}{\lambda+\lambda/\log(\lambda)}\right)!}\\
    &\geq \left(\frac{\lambda \sqrt{n}}{2\log(\lambda)}\right)^{\frac{\sqrt{n}}{\lambda+\lambda/\log(\lambda)}}\left(\frac{e(\lambda+\lambda/\log(\lambda))}{\sqrt{n}}\right)^{\frac{\sqrt{n}}{\lambda+\lambda/\log(\lambda)}}\frac{2(\log(\lambda)+1)}{e{n}}\\
    &=\left(e{\lambda ^2\left(\frac{1}{2\log(\lambda)}+\frac{1}{2\log^2(\lambda)}\right)}\right)^{\frac{\sqrt{n}}{\lambda}-\frac{\sqrt{n}}{\lambda(\log(\lambda)+1)}}\frac{2(\log(\lambda)+1)}{e{n}}\\
    &\geq \left(\frac{\lambda }{\log(\lambda)}\right)^{\frac{2\sqrt{n}}{\lambda}-\frac{4\sqrt{n}}{\lambda\log(\lambda)}}=\exp\left(\left(\log(\lambda)-\log\log(\lambda)\right)\left(\frac{2\sqrt{n}}{\lambda}-\frac{4\sqrt{n}}{\lambda\log(\lambda)}\right)\right)\\
    &\geq \exp\left(\frac{\left(2\log(\lambda)-3\log\log(\lambda)\right)\sqrt{n}}{\lambda}\right)\;,
\end{align*}
where the third inequality holds provided $\frac{n_3}{2(\log(\lambda)+1)}\geq 1$ and provided $\lambda_0\geq 1$, the fifth inequality holds provided $\log\left(\frac{en_3}{2(\log(\lambda)+1)}\right)\leq (\log(\lambda)-\log\log(\lambda))\frac{\sqrt{n_3}}{\lambda \log(\lambda)}$, and the sixth inequality holds provided $\log\log(\lambda_0)\geq 4$.

As for the furthermore part of the theorem, fix $0< \varepsilon\leq 1$ and note the correspondence we used above when proving the lower bound of \eqref{Thm - a result of [2]},
took partitions of $n-k\cdot\lfloor\lambda\sqrt{n}\rfloor$ that use $k$ integers and mapped them to partitions of $n$ using integers in $[\lambda \sqrt{n},\left(2\lambda+\frac{\log(\lambda)}{\lambda+\lambda\log(\lambda)}\right)\sqrt{n}+\frac{n}{\log(\lambda)+1}]$.
Therefore, if we assume that $\frac{2}{\varepsilon}\leq \log(\lambda_0)+1$ and $\sqrt{n_3}\geq \frac{2}{\varepsilon}\left(2\lambda+\frac{\log(\lambda)}{\lambda+\lambda\log(\lambda)}\right) $ then
we in fact obtain the same lower bound stated in \eqref{Thm - a result of [2]} even if using only integers from the interval
$[\lambda\sqrt{n},\varepsilon n]$.
\end{proof}

We now use Lemmas \ref{Lemma - main lemma liminf lower bound}, \ref{Theorem - szekeres} and \ref{Theorem - France} in order to prove Theorem \ref{theolower} equation \eqref{eqTheo3}.

\begin{proof}[Proof of Theorem \ref{theolower} equation \eqref{eqTheo3}]
By \eqref{equation - Hardy Ramanujan} it is sufficient to prove that there exists a set of positive integers $A$ with lower density $\alpha$ and $\gcd(A)=1$ satisfying
\begin{equation}\label{equation - first equation upper bound}
        \log p_{A}(m)\geq (2\log(1/\alpha)-\Theta(\log\log(\alpha)))\sqrt{\alpha m}\;.
\end{equation}
To this end suppose $\alpha<\alpha_0$ where $\alpha_0$ is a small positive real which will be specified later. Let $n$ be an integer with $n>n_0$ where $n_0=n_0(\alpha)$ is some positive integer which will be specified later and will also be greater than $1/\alpha$.
We claim that the set $A$ which we introduce next satisfies \eqref{equation - first equation upper bound}. Define a sequence of sets $A_i$ recursively as follows. Set $f(0)=1$, and $f(1)=n_0$, and for any positive integer $i$ let $f(i+1)={f(i)^{2}}$, $A_{i+1}=[f(i), \alpha f(i+1)]$. Finally take $A=\bigcup_{n\geq 1}A_{n}$. It is easy to see that the lower density of $A$ is $\alpha$, and since $n_0>1/\alpha$ we have $1\in A_1\subseteq A$ which implies that $\gcd(A)=1$.

Provided
\begin{equation}\label{equation - condition n_0 1}
    n_0>n_1(\alpha)
\end{equation}
we may use Lemma \ref{Lemma - main lemma liminf lower bound} invoked with $n$ replaced by $f(i)\geq n_0$ for $i\geq 1$, which asserts the following for all $m\in [16 \alpha f(i+1),\alpha f(i+2)/16]$,
\[
    p_{\{1\}\cup[f(i),\alpha f(i+1)]}(m) \geq e^{(2\log(1/\alpha)-8)\sqrt{\alpha m}}\;.
\]
For all $i\geq 2$ the set $\{1\}\cup [f(i),\alpha f(i+1)]$ is a subset of $A$ and thus we obtain \eqref{equation - first equation upper bound} for all $m\geq \alpha f(3)/16$ except for $m\in [\alpha f(i+1)/16,16\alpha f(i+1)]$ where $i\geq 2$.
Therefore, to complete the proof of \eqref{equation - first equation upper bound} it remains to consider only $m\in [\alpha f(i+1)/16,16 \alpha f(i+1)]$ where $i\geq 2$ is some integer.

Therefore for the rest of the proof let us fix $i\geq 2$. For simplicity of presentation denote $f(i+1)$ by $n^2$ and then $f(i)={n}$ and $f(i-1)=\sqrt{n}$. Let $c$ be a real number with $1/16\leq c\leq 16$ and set
\[
    m=c\cdot \alpha\cdot n^2\;.
\]
Since $A$ contains both $\{1\}\cup A_{i}=\{1\}\cup[\sqrt{n},\alpha n]$ and $A_{i+1}=[n,\alpha n^2]$ we deduce the following for all $0\leq \delta \leq 1$,
\begin{align*}
    p_{A}(m)&\geq \sum_{k=0}^{m}p_{\{1\}\cup [\sqrt{n},\alpha n]}(k)\cdot p_{[n,\alpha n^2]}(m-k)\\
    &\geq p_{\{1\}\cup [\sqrt{n},\alpha n]}(\delta m)\cdot p_{[n,\alpha n^2]}((1-\delta) m)\;.
\end{align*}
Hence to establish \eqref{equation - first equation upper bound} it is enough to show that there exists $0<\delta < 1$ such that for all $1/16\leq c\leq 16$ we have
\begin{equation}\label{what we need 1}
    \log(p_{\{1\}\cup [\sqrt{n},\alpha n]}(\delta m))+\log(p_{[n,\alpha n^2]}((1-\delta) m))\geq (2\log(1/\alpha)-\Theta(\log\log(1/\alpha)))\sqrt{\alpha m}\;.
\end{equation}
To simplify \eqref{what we need 1} we observe that
\begin{equation} \label{equation - Szekeres first estimation}
     p_{[\alpha n]}(\delta m)= \sum _{k=0}^{\delta m} p_{[2,\sqrt{n}-1]}(k)\cdot p_{\{1\}\cup [\sqrt{n},\alpha n]}(\delta m-k)\leq(\delta m+1)(\delta m+1)^{\sqrt{n}}\cdot p_{\{1\}\cup [\sqrt{n},\alpha n]}(\delta m) \;,
\end{equation}
where the inequality holds by Claim \ref{Claim - trivial bound} and by the monotonicity of $p_{\{1\}\cup [\sqrt{n},\alpha n]}$.
Hence, provided
\begin{equation}\label{equation - condition n_0 -1}
    \alpha \log\log(1/\alpha)\sqrt{n_0}/16\geq 2\log(8n_0)\;,
\end{equation}
to obtain \eqref{what we need 1} it is enough to prove that there exists $0<\delta <1$ such that the following holds for all $1/16\leq c \leq 16$
\begin{equation}\label{equation - what we need 2}
    \log(p_{[\alpha n]}(\delta m))+\log(p_{[n,\alpha n^2]}((1-\delta)m))\geq (2\log(1/\alpha)-\Theta(\log\log(1/\alpha)))\sqrt{\alpha m}\;.
\end{equation}

Provided $\delta$ and $\alpha$ satisfy the right-hand inequality (the left-hand inequality holds as $c\geq 1/16$)
\begin{align}\label{equation - condition alpha 1.5}
    \frac{\alpha}{\delta \cdot c}\leq \frac{\alpha}{\delta/16}\leq 1\;,
\end{align}
and $n_0$ satisfies
\begin{equation}\label{equation - condition n_0 2}
    \delta m \geq \delta \alpha\cdot n_0/16 > n_2\left(\sqrt{\frac{\alpha}{16\delta}}\right)\geq n_2\left(\sqrt{\frac{\alpha}{\delta\cdot c}}\right)\;,
\end{equation}
we may apply Lemma \ref{Theorem - szekeres} invoked with $\gamma$ replaced by $\sqrt{\frac{\alpha}{\delta \cdot c}}$ and with $n$ replaced by $\delta m$ and obtain,
\begin{align}
    \log\left(p_{[\alpha n]}(\delta m)\right) &\geq \left({2\sqrt{\frac{\alpha}{\delta \cdot c}}}\log\left(\sqrt{{\frac{c\cdot \delta}{\alpha }}}\right)+\Theta\left(\sqrt{\frac{\alpha}{\delta \cdot c}} \right)\right)\sqrt{\delta m} \nonumber\\
    &= \left({\alpha }\log\left({\frac{c\cdot \delta}{\alpha }}\right)+\Theta\left(\alpha \right)\right)n \nonumber\\
    &\geq \left({\alpha }\log\left({\frac{\delta}{\alpha }}\right)+\Theta\left(\alpha \right)\right)n \label{equation - first part sharp estimation}\;,
\end{align}
where the second inequality holds as $c\geq 1/16$.

From now on let us assume (with foresight) that the $\delta$ which establishes \eqref{equation - what we need 2} is less than $1/2$.
Provided $\alpha_0$ is small enough so that
\begin{equation}\label{equation - condition alpha 1}
    {\frac{1}{\sqrt{(1-\delta)c\cdot\alpha}}} \geq {\frac{1}{\sqrt{16\alpha}}}\geq \lambda_0\left(\frac{1}{16}\right)\geq \lambda_0\left(\frac{1}{16(1-\delta)}\right)\;,
\end{equation}
and $n_0$ satisfies
\begin{equation}\label{equation - condition n_0 3}
    (1-\delta)m\geq \frac{\alpha \cdot n_0^2}{16}\geq n_3\left({\frac{1}{\sqrt{\alpha/32}}},\frac{1}{16}\right)\geq n_3\left(\frac{1}{\sqrt{(1-\delta)c\cdot \alpha}},\frac{1}{16(1-\delta)}\right)\;,
\end{equation}
we may apply Lemma \ref{Theorem - France} invoked with $\varepsilon=\frac{1}{16(1-\delta)}$, $\lambda=\frac{1}{\sqrt{(1-\delta)c\cdot \alpha}}$,
and with $n$ replaced by $(1-\delta)m$. As $1/16\leq c\leq 16$ and $0<\delta<1/2$ we obtain,
\begin{align}\label{equation - with c}
    \log\left(p_{[n,m/16]}((1-\delta)m)\right)&\geq \left(\frac{2\log\left(1/{\sqrt{(1-\delta)c\cdot \alpha}}\right)-\Theta\left(\log\log\left(1/{\sqrt{(1-\delta)c\cdot \alpha}}\right)\right)}{1/{\sqrt{(1-\delta)c\cdot \alpha}}}\right)\sqrt{(1-\delta)m}\nonumber\\
    &\geq (1-\delta)c \cdot \alpha \left(\log\left(\frac{1}{\alpha}\right)-\Theta\left(\log\log\left(\frac{1}{{\alpha}}\right)\right)\right)n \;.
\end{align}
Since $m/16\leq \alpha n^2$ \eqref{equation - with c} implies
\begin{align}\label{equation - second part sharp estimation}
    \log\left(p_{[n,\alpha n^2]}((1-\delta)m)\right)\geq (1-\delta)c\cdot\alpha \log\left(\frac{1}{\alpha}\right)n-\Theta\left(\alpha\log\log\left(\frac{1}{{ \alpha}}\right)\right)n\;.
\end{align}

All that is left is to choose the optimal $\delta$ that will maximize the sum of \eqref{equation - first part sharp estimation} and \eqref{equation - second part sharp estimation}.
It is not hard to see that (up to lower order terms) the optimal choice is $\delta=1/\log(1/\alpha)$, and that
with this choice of $\delta$, we can choose $\alpha_0$ small enough so that \eqref{equation - condition alpha 1.5} and \eqref{equation - condition alpha 1} will hold,
and then choose $n_0$ large enough so that $n_0 > 1/\alpha$ and \eqref{equation - condition n_0 1}, \eqref{equation - condition n_0 -1}, \eqref{equation - condition n_0 2} and \eqref{equation - condition n_0 3} will hold. Plugging $\delta = 1/\log(1/\alpha)$ in \eqref{equation - first part sharp estimation} we obtain,
\begin{align}\label{equation - last 1}
    \log\left(p_{[\alpha n]}(\delta m)\right) &\geq \left({\alpha }\log\left({\frac{1}{\alpha\log(1/\alpha) }}\right)+\Theta\left(\alpha \right)\right)n=\alpha\log(1/\alpha)n-\Theta(\alpha \log\log(1/\alpha))n\;.
\end{align}
Similarly plugging $\delta = 1/\log(1/\alpha)$ in \eqref{equation - second part sharp estimation} we obtain,
\begin{align}
    \log\left(p_{[n,\alpha n^2]}((1-\delta)m)\right)&\geq \left(1-\frac{1}{\log(\alpha)}\right)c\cdot\alpha \log\left(\frac{1}{\alpha}\right)n-\Theta\left(\alpha\log\log\left(\frac{1}{{\alpha}}\right)\right)n\nonumber\\
    &\geq c\cdot \alpha\log(1/\alpha)n-\Theta(\alpha \log\log(1/\alpha))n\label{equation - last 2}
\end{align}
Combining \eqref{equation - last 1} and \eqref{equation - last 2} we obtain
\begin{align*}
        \log(p_{[\alpha n]}(\delta m))+\log(p_{[n,\alpha n^2]}((1-\delta)m))&\geq ((1+c)\log(1/\alpha)-\Theta(\log\log(1/\alpha)))\alpha n\\
        &= ((1/\sqrt{c}+\sqrt{c})\log(1/\alpha)-\Theta( \log\log(1/\alpha)))\sqrt{\alpha m}\\
        &\geq (2\log(1/\alpha)-\Theta(\log\log(1/\alpha))) \sqrt{\alpha m}\;,
\end{align*}
where the last inequality holds as $1/x+x\geq 2$ for all $x>0$. This is \eqref{equation - what we need 2}, and the proof is completed.
\end{proof}

We now turn to the proof of Theorem \ref{theolower} equation \eqref{eqTheo4}. We will need the following lemma.

\begin{lemma}\label{Lemma - main lemma upper bound on liminf}
There exists $\alpha_0>0$ such that for all $0<\alpha<\alpha_0$ there exists an integer $n_0=n_0(\alpha)$ such that the following holds for every integer $n>n_0$. If $A$ is a set of positive integers with $|A\cap [n]|=\alpha n$ then we have the following where $m=\alpha n^2$,
\[
    p_{A}(m)\leq e^{(2\log(1/\alpha) +\Theta(\log\log(1/\alpha)))\sqrt{\alpha m}}\;.
\]
\end{lemma}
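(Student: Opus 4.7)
The plan is to decompose every partition of $m$ counted by $p_A(m)$ into a ``small-part'' piece (parts from $A\cap[n]$) and a ``large-part'' piece (parts from $A\cap[n+1,\infty)$). Since no part in a partition of $m$ can exceed $m$, only elements of $A\cap[n+1,m]$ can appear as large parts, and summing over the total mass $s$ carried by small parts gives
\[
    p_A(m) \;\leq\; \sum_{s=0}^{m} p_{A\cap[n]}(s)\cdot p_{A\cap[n+1,m]}(m-s) \;\leq\; (m+1)\cdot p_{[\alpha n]}(m)\cdot p_{[n+1,m]}(m)\;,
\]
where the second inequality uses Lemma~\ref{Lemma - the first part is always the best} (which gives $p_{A\cap[n]}(s)\leq p_{[\alpha n]}(s)$) together with the monotonicity $p_{[\alpha n]}(s)\leq p_{[\alpha n]}(m)$ for the first factor, and for the second factor uses the trivial $p_{A\cap[n+1,m]}(t)\leq p_{[n+1,t]}(t)$ together with the monotonicity $p_{[n+1,t]}(t)\leq p_{[n+1,m]}(m)$; both monotonicities follow from simple partition-padding injections (append a $1$; or add $1$ to the largest part).

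Each of the two factors is now exactly in a form handled by one of the preceding lemmas. Since $\alpha n=\sqrt{\alpha}\cdot\sqrt{m}$, Lemma~\ref{Theorem - szekeres} applied with $n$ replaced by $m$ and $\gamma=\sqrt{\alpha}$ gives
\[
    \log p_{[\alpha n]}(m) \;\leq\; \bigl(2\sqrt{\alpha}\log(1/\sqrt{\alpha})+\Theta(\sqrt{\alpha})\bigr)\sqrt{m} \;=\; \bigl(\log(1/\alpha)+\Theta(1)\bigr)\sqrt{\alpha m}\;;
\]
and since $(n+1)/\sqrt{m}\sim 1/\sqrt{\alpha}$, Lemma~\ref{Theorem - France} applied with $n$ replaced by $m$ and $\lambda=(n+1)/\sqrt{m}$ gives
\[
    \log p_{[n+1,m]}(m) \;\leq\; \bigl(\log(1/\alpha)+\Theta(\log\log(1/\alpha))\bigr)\sqrt{\alpha m}\;.
\]

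Summing the two bounds, the $\log(m+1)=O(\log n)$ overhead is absorbed into the $\Theta(\log\log(1/\alpha))\sqrt{\alpha m}=\Theta(\alpha n\log\log(1/\alpha))$ error once $n_0(\alpha)$ is large, yielding the claimed upper bound $p_A(m)\leq\exp\bigl((2\log(1/\alpha)+\Theta(\log\log(1/\alpha)))\sqrt{\alpha m}\bigr)$. The only care required is to pick $\alpha_0$ small enough that $1/\sqrt{\alpha_0}\geq\lambda_0$ (so Lemma~\ref{Theorem - France} applies) and $n_0(\alpha)$ large enough to exceed $n_2(\sqrt{\alpha})$ and to make $\alpha n^2>n_3(1/\sqrt{\alpha})$. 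There is no real obstacle beyond this bookkeeping: once the decomposition is set up, the two lemmas directly deliver the $\log(1/\alpha)$ rate on each factor, and the $\Theta(1)$ error from the small-part factor is dominated by the $\Theta(\log\log(1/\alpha))$ error from the large-part factor.
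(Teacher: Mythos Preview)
Your proposal is correct and follows essentially the same strategy as the paper: split $A$ at $n$, bound $p_A(m)$ by $(m+1)$ times the product of the two maxima, control the small-part factor via Lemma~\ref{Lemma - the first part is always the best} and Lemma~\ref{Theorem - szekeres} with $\gamma=\sqrt{\alpha}$, and control the large-part factor via Lemma~\ref{Theorem - France} with $\lambda\sim 1/\sqrt{\alpha}$. The only cosmetic difference is in the large-part factor: the paper bounds $p_{A_2}(k)\le p_{[\sqrt{k/\alpha},k]}(k)$ and invokes Lemma~\ref{Theorem - France} at each $k\in[n+1,m]$ with the \emph{fixed} value $\lambda=1/\sqrt{\alpha}$, whereas you first use the padding injection $p_{[n+1,t]}(t)\le p_{[n+1,m]}(m)$ and then invoke Lemma~\ref{Theorem - France} once with $\lambda=(n+1)/\sqrt{m}$. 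Your route is marginally cleaner but introduces an $n$-dependent $\lambda$; to avoid any worry about $n_3(\lambda)$ varying with $n$, you can simply note $[n+1,m]\subseteq[\sqrt{m/\alpha},m]$ and take $\lambda=1/\sqrt{\alpha}$ exactly, as the paper does.
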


\begin{proof}
Let $A_1=A\cap [n]$ and $A_2=A\cap [n+1,m]$. We have,
\begin{align*}
    p_A(m)=\sum_{k=0}^{m} p_{A_1}(k)\cdot p_{A_2}(m-k)\leq (m+1)\cdot \max_{k\in [m]} p_{A_1}(k)\cdot \max_{k\in [m]} p_{A_2}(k)\;.
\end{align*}
Hence it is enough to show that the following holds for all $k\in [m]$ and $j=1,2$,
\begin{equation}\label{equation - last equation}
    p_{A_j}(k)\leq e^{(\log(1/\alpha) +\Theta(\log\log(1/\alpha)))\sqrt{\alpha m}}\;.
\end{equation}
Using Lemma \ref{Lemma - the first part is always the best} and the monotonicity of $p_{[\alpha n]}$ we deduce that,
\begin{align*}
    p_{A_1}(k)&\leq p_{[\alpha n]}(k)\leq p_{[\alpha n]}(m)=p_{[\sqrt{\alpha m}]}(m)\;.
\end{align*}
Provided $n_0$ is large enough we may use Lemma \ref{Theorem - szekeres} invoked with $n$ replaced by $m$ and $\gamma $ replaced by $\sqrt{\alpha}$ and obtain
\[
    \log p_{[\sqrt{\alpha m}]}(m)\leq (2\sqrt{\alpha} \log(1/\sqrt{\alpha})+ \Theta(\sqrt{\alpha}))\sqrt{m}= (\log(1/{\alpha})+ \Theta(1))\sqrt{\alpha m}\;.
\]
Combining the above we obtain \eqref{equation - last equation} for $j=1$.

We now prove \eqref{equation - last equation} for $j=2$. We first note that if $k\in [n]$ then \eqref{equation - last equation} trivially holds since in this case $p_{A_2}(k)=0$. Assume now $k\in[n+1,m]$. We may use Lemma \ref{Theorem - France} with $\lambda=1/\sqrt{\alpha}$ and $n$ replaced by $k$ provided $\alpha_0$ is small enough so that $1/\sqrt{\alpha_0}>\lambda_0$ and $k>n_0>n_3(1/\sqrt{\alpha})$. We obtain
\begin{align*}
    \log p_{[\sqrt{k/\alpha},k]}(k)&\leq \left(\frac{2\log(1/\sqrt{\alpha}) + \Theta(\log\log(1/\sqrt{\alpha}))}{1/\sqrt{\alpha}}\right)\sqrt{k}\\
    &= \left({\log(1/\alpha) +\Theta(\log\log(1/\alpha))}\right)\sqrt{\alpha k}\\
    &\leq \left({\log(1/\alpha) +\Theta(\log\log(1/\alpha))}\right)\sqrt{\alpha m}.
\end{align*}
Since $k\leq m=\alpha n^2$ then $\sqrt{k/\alpha}\leq n$ and therefore $A_2\cap [k]\subseteq [n+1,k]\subseteq[\sqrt{k/\alpha},k]$ and thus
\[
    p_{A_2}(k)=p_{A_2\cap [k]}(k)\leq p_{[\sqrt{k/\alpha},k]}(k).
\]
Combining the above we obtain \eqref{equation - last equation} for $j=2$.
\end{proof}

Now using the above lemma we prove Theorem \ref{theolower} equation \eqref{eqTheo4}.

\begin{proof}[Proof of Theorem \ref{theolower} equation \eqref{eqTheo4}]
Suppose $\alpha<\alpha_0$ where $\alpha_0$ is given by Lemma \ref{Lemma - main lemma upper bound on liminf} and $A$ is a set of positive integers with lower density $\alpha$. Similar to the proof of Theorem \ref{theolower} equation \eqref{eqTheo3} it is sufficient to prove that there exist infinitely many pairs of integer and real number $(m_i,\alpha_i)$ satisfying
\begin{equation}\label{equation - truly last}
    \log p_{A}(m_i)\leq {(2\log(1/\alpha_i)+\Theta(\log\log(1/\alpha_i)))\sqrt{\alpha_i m_i}},\;\lim_{i\to \infty}\alpha_i=\alpha, \mbox{ and } \lim_{i\to \infty}m_i=\infty\;.
\end{equation}
Since $A$ has lower density $\alpha$ there exists an increasing sequence $\{n_i\}_{i=0}^\infty$ of positive integers such that setting $A_{i}=A\cap [n_i]$ and $\alpha_i=|A_i|/n_i$ we have $\lim_{i \to \infty}\alpha _i=\alpha$. Fix $i_0$ large enough so that for all $i>i_0$ we have $\alpha/2<\alpha_i<\alpha_0$ and $n_i>n_0(\alpha/2)$ where $n_0(\alpha/2)$ is given by Lemma \ref{Lemma - main lemma upper bound on liminf}. Now we may use Lemma \ref{Lemma - main lemma upper bound on liminf} invoked with $\alpha$ replaced by $\alpha_i$ and $n$ replaced by $n_i$ and obtain
\[
    p_{A}(m_i)\leq e^{{2(\log(1/\alpha_i) +\Theta(\log\log(1/\alpha_i)))\sqrt{\alpha_i m_i}}}\;,
\]
where $m_i=\alpha_i n_i^2$. Since $\lim_{i\to \infty}\alpha_i=\alpha$ and $\lim_{i\to \infty}m_i=\infty$ we obtain \eqref{equation - truly last}, thus completing the proof.
\end{proof}

\section{Proof of Theorem \ref{theoupper}}\label{secproof}

We start with a proof of Theorem \ref{theoupper} equation \eqref{eqTheo2} and then move on to proving Theorem \ref{theoupper} equation \eqref{eqTheo1}.

\begin{lemma}\label{Lemma - pigeonhole principle}
Suppose $m$ is a positive integer and $A\subseteq [m]$ with $\beta=|A|/m$. Then, there exists $n \in [\beta^2 m^2/4,\beta(1-\beta/4)m^2]$ satisfying
\[
    \log p_{A}(n)\geq (2\log(2)-o_n(1))\sqrt{\frac{\beta n}{1-\beta/4}}\;.
\]
\end{lemma}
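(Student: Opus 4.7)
The plan is to lower-bound $p_A(n)$ by counting multisets of a fixed size drawn from $A$. Set $k := \lfloor \beta m \rfloor$ and consider the $\binom{|A|+k-1}{k} = \binom{2\beta m - 1}{\beta m}$ tuples $(c_a)_{a \in A}$ with $c_a \in \mathbb{Z}_{\geq 0}$ and $\sum_a c_a = k$; by Stirling this count is $(1-o(1))\cdot 4^{\beta m}/(2\sqrt{\pi \beta m})$. Each such tuple is a distinct partition of $N := \sum_a c_a a$ with parts in $A$, so it suffices to find some $n$ in the target interval that many of these tuples sum to. The key is that $4^{\beta m}$ gives exactly the order of magnitude $(2\log 2)\beta m$ that we need on the right-hand side.

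To pin down the typical value of $N$, draw a uniformly random such multiset. By the symmetry of the composition distribution $\E[c_a] = k/|A| = 1$, so $\E[N] = s := \sum_{a \in A} a$. The extreme values of $s$ over $A \subseteq [m]$ with $|A| = \beta m$ are $s_{\min} \sim \beta^2 m^2/2$ (at $A = [\beta m]$) and $s_{\max} \sim \beta(1-\beta/2)m^2$ (at $A = [m-\beta m+1, m]$); using $\beta(1-\beta/2) + \beta^2/4 = \beta(1-\beta/4)$, this means $s$ sits inside $[\beta^2 m^2/4, \beta(1-\beta/4)m^2]$ with cushion at least $\beta^2 m^2/4$ on each side. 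From the explicit pmf $\Prob[c_a = j] = \binom{|A|+k-j-2}{k-j}/\binom{|A|+k-1}{k}$ one checks that $c_a$ is approximately $\mathrm{Geom}(1/2)$-distributed (with tail $\Prob[c_a\geq j]\leq 2^{-j+1}$), so $\Var[c_a] = O(1)$; combined with the identity $\Cov(c_a, c_b) = -\Var[c_a]/(|A|-1)$ forced by $\sum c_a = k$, this yields $\Var[N] = O(\sum_a a^2) = O(\beta m^3)$, hence $\sigma := \sqrt{\Var[N]} = O(m^{3/2} \sqrt{\beta})$.

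For $m$ large enough that $2\sigma < \beta^2 m^2/4$, Chebyshev's inequality places at least $3/4$ of the multisets in the window $[s-2\sigma, s+2\sigma]$, which is contained in $[\beta^2 m^2/4, \beta(1-\beta/4)m^2]$. Pigeonholing on this window yields some $n$ in it with
\[
    \log p_A(n) \geq \log\binom{2\beta m - 1}{\beta m} - \log(4\sigma + 2) - O(1) = 2\beta m \log 2 - O(\log m).
\]
Since $n \leq \beta(1-\beta/4) m^2$ gives $\sqrt{\beta n/(1-\beta/4)} \leq \beta m$, while $n \geq \beta^2 m^2/4$ gives $\sqrt{\beta n/(1-\beta/4)} = \Omega(\beta^{3/2} m)$, dividing through turns the $O(\log m)$ error into an $o_n(1)$ loss on the constant $2\log 2$, yielding the claimed bound. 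The main technical step will be the $O(1)$ variance estimate for the marginal $c_a$ under the uniform composition distribution; everything else is routine Stirling, Chebyshev, and pigeonhole.
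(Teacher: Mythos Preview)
Your argument is correct, but it takes a different route from the paper. The paper avoids any concentration step: it splits $A$ into its $\beta m/2$ smallest elements $A_1$ and its $\beta m/2$ largest elements $A_2$, and counts the $\binom{\beta m-1}{\beta m/2}^2\approx 4^{\beta m}$ ways to pick $\beta m/2$ elements (with repetition) from each half. The point of the split is that the resulting sum is \emph{deterministically} trapped in $[\beta^2 m^2/4,\ \beta(1-\beta/4)m^2]$, since $\min A_2\ge \beta m/2$ forces the lower bound and $\max A_1\le m-\beta m/2$ forces the upper bound. A single pigeonhole over at most $m^2$ values then finishes.

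Your approach achieves the same count $\binom{2\beta m-1}{\beta m}\approx 4^{\beta m}$ by taking all size-$\beta m$ multisets from $A$, but must then localize the sum probabilistically: you compute $\Var[c_a]=O(1)$ via the geometric tail, use the exchangeability identity $\Cov(c_a,c_b)=-\Var[c_a]/(|A|-1)$ to get $\Var[N]=O(\beta m^3)$, and apply Chebyshev. This is sound, and in fact yields a sharper localization of $n$ (within $O(\sqrt{\beta}\,m^{3/2})$ of $\sum_{a\in A}a$ rather than merely somewhere in an interval of length $\Theta(\beta m^2)$), though that extra precision is not needed here. The tradeoff is that the paper's $A_1/A_2$ trick is shorter and entirely elementary, while your method is the natural second-moment argument and would generalize more easily if one wanted finer control on where $n$ lands.
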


\begin{proof}
Let $A_1$ be the set of $\beta m/2$ smallest integers in $A$ and $A_2$ the set of $\beta m/2$ largest integers in $A$. Note that any $\beta m/2$ integers in $A_1$ sum up to an integer in $[\beta(1-\beta/2) m^2/2]$. Furthermore, the number of options to choose $\beta m/2 $ integers from $A_1$ (not necessarily distinct) is exactly the same as the number of nonnegative solutions to $\sum_{i=1}^{\beta m/2} x_i=\beta m/2$. That can be seen by taking $x_i$ to be the number of times the $i^{th}$ smallest element in $A_1$ is taken. Hence there are $\binom{\beta m-1}{\beta m/2}$ such choices.
Similarly, any $\beta m/2$ integers in $A_2$ (not necessarily distinct) sum up to an integer in $[\beta^2 m^2/4,\beta m^2/2]$. Similar to before, the number of ways to choose these integers is $\binom{\beta m-1}{\beta m/2}$. Therefore, the number of ways one can choose $\beta m$ integers from $A$, such that half of them are taken from $A_1$ and the other half from $A_2$ is $\binom{\beta m-1}{\beta m/2}^2$. Since the sum of these integers is an integer in $[\beta^2 m^2/4,\beta(1-\beta/4)m^2]$ we infer by the pigeonhole principle that there exists $n\in[\beta ^2m^2/4,\beta(1-\beta/4)m^2]$ satisfying
\[
    p_{A}(n)\geq \frac{\binom{\beta m-1}{\beta m/2}^2}{m^2}= \frac{\binom{\beta m}{\beta m/2}^2}{4m^2}\geq \frac{2^{2\beta m}}{4(\beta m+1)^2m^2}= 2^{(2-o_n(1))\beta m}\geq 2^{(2-o_n(1))\sqrt{\frac{\beta n}{1-\beta/4} }}\;.\qedhere
\]
\end{proof}

\begin{proof}[Proof of Theorem \ref{theoupper} equation \eqref{eqTheo2}]
Let $A$ be a set of positive integers with upper density $\beta<1/2$. By \eqref{equation - Hardy Ramanujan} it is sufficient to prove that there exist infinitely many pairs of integer and real numbers $(n_i,\beta_i)$ satisfying
\begin{equation}\label{equation - what we need in 1.2}
     \log p_{A}(n_i)\geq (2\log(2)-o_{n_i}(1))(1+\beta_i/10)\sqrt{{\beta_i n_i}}\;,
\end{equation}
$\lim_{i\to \infty} n_i =\infty$ and $\lim_{i\to \infty} \beta_i=\beta$.
Since $A$ has upper density $\beta$ there exists an increasing sequence of integers $\{m_i\}_{i=0}^{\infty}$ such that setting $A_i=A\cap[m_i]$ and $\beta_i = {|A_i|}/{m_i}$ we have $\lim_{i\to \infty} \beta_{i}=\beta$. Lemma \ref{Lemma - pigeonhole principle} implies that for every $i$ there exists $n_i\in [\beta_i^2 m_i^2/4,\beta_i(1-\beta_i/4)m_i^2]$ satisfying
\[
    \log p_{A}(n_i) \geq {(2\log(2)-o_{n_i}(1))\sqrt{\frac{\beta_i n_i}{1-\beta_i/4}}}\geq {(2\log(2)-o_{n_i}(1))(1+\beta_i/10)\sqrt{\beta_i n_i}}\;.
\]
Since $\lim_{i\to \infty} n_i =\infty$ and $\lim_{i\to \infty} \beta_i=\beta$ we obtain \eqref{equation - what we need in 1.2}.
\end{proof}

To prove Theorem \ref{theoupper} equation \eqref{eqTheo1} require the following lemma.

\begin{lemma}\label{Main lemma}
Suppose $0<\beta<1$. Then for all $n,m$ positive integers we have,
\[
    p_{[(1-\beta)n+1,n]}(m)\leq e^{2\log(2)\sqrt{\frac{\beta }{1-\beta} m}}\;.
\]
\end{lemma}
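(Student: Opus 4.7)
The plan is to exploit the fact that partitions counted by $p_{[(1-\beta)n+1,n]}(m)$ have few parts, replace them by the larger class of multisets, and then bound a single binomial coefficient via entropy. Since every part is at least $(1-\beta)n$, any such partition uses at most $K:=\lfloor m/((1-\beta)n)\rfloor$ summands. Thus $p_{[(1-\beta)n+1,n]}(m)$ is bounded above by the number of multisets of size at most $K$ drawn from the $\beta n$-element set $[(1-\beta)n+1,n]$. By the hockey-stick identity,
\[
\sum_{k=0}^{K}\binom{\beta n+k-1}{k}=\binom{\beta n+K}{K},
\]
so $p_{[(1-\beta)n+1,n]}(m)\leq\binom{\beta n+K}{K}$.

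Setting $a=\beta n$ and $b=K$, we have $ab\leq\beta n\cdot m/((1-\beta)n)=\beta m/(1-\beta)$, so it suffices to prove the universal inequality $\binom{a+b}{b}\leq 2^{2\sqrt{ab}}$ for nonnegative integers $a,b$. Given this, the desired conclusion follows from the chain
\[
p_{[(1-\beta)n+1,n]}(m)\leq\binom{\beta n+K}{K}\leq 2^{2\sqrt{\beta n\cdot K}}\leq 2^{2\sqrt{\beta m/(1-\beta)}}=e^{2\log(2)\sqrt{\beta m/(1-\beta)}}.
\]

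The main technical obstacle is proving the binomial inequality above, which is tight at $a=b$ and hence admits no loss of constants. I would derive it in two steps. First, invoke the standard entropy bound $\binom{a+b}{b}\leq 2^{(a+b)H_2(b/(a+b))}$, where $H_2(p):=-p\log_2 p-(1-p)\log_2(1-p)$ denotes the binary entropy. Second, establish the one-variable analytic inequality $H_2(p)\leq 2\sqrt{p(1-p)}$ on $[0,1]$; substituting $p=b/(a+b)$ and multiplying by $a+b$ yields $(a+b)H_2(p)\leq 2(a+b)\sqrt{ab/(a+b)^2}=2\sqrt{ab}$, which combined with the first bound gives $\binom{a+b}{b}\leq 2^{2\sqrt{ab}}$. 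The analytic inequality is tight at $p=0,1/2,1$ and strict elsewhere, and can be verified by elementary calculus: the difference $f(p):=2\sqrt{p(1-p)}-H_2(p)$ is symmetric about $p=1/2$, satisfies $f(0)=f(1/2)=0$, has $f'(1/2)=0$ and $f''(1/2)>0$, and a direct derivative analysis on $[0,1/2]$ confirms it has no interior sign change.
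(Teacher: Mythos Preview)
Your argument is correct and follows the same overall route as the paper: bound the number of parts by $K\le m/((1-\beta)n)$, count multisets to get $p_{[(1-\beta)n+1,n]}(m)\le\binom{\beta n+K}{K}$, apply the entropy bound $\binom{N}{k}\le 2^{N H_2(k/N)}$, and finish with a one-variable calculus inequality. The paper reaches exactly the same binomial coefficient (written as $\binom{\gamma n/(1-\beta)+\beta n}{\gamma n/(1-\beta)}$ with $\gamma=m/n^2$) and the same entropy bound.

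Where you differ is in the final analytic step. You isolate the clean universal inequality $\binom{a+b}{b}\le 2^{2\sqrt{ab}}$, equivalently $H_2(p)\le 2\sqrt{p(1-p)}$, and then simply plug in $a=\beta n$, $b=K$ together with $ab\le\beta m/(1-\beta)$. The paper instead keeps the parameters $\beta,\gamma$ explicit, defines $f_\beta(\gamma)=H_2\bigl(\tfrac{\gamma}{\gamma+\beta(1-\beta)}\bigr)\cdot\tfrac{\gamma/(1-\beta)+\beta}{\sqrt\gamma}$, and proves by a somewhat longer derivative computation (introducing an auxiliary $g_a(x)$ and analysing its first and second derivatives) that $f_\beta$ is maximised at $\gamma=\beta(1-\beta)$ with value $2\sqrt{\beta/(1-\beta)}$. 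Unwinding the substitutions, the paper's inequality $f_\beta(\gamma)\le 2\sqrt{\beta/(1-\beta)}$ is \emph{exactly} your inequality $H_2(p)\le 2\sqrt{p(1-p)}$ with $p=\gamma/(\gamma+\beta(1-\beta))$, so the two proofs are mathematically identical; your packaging is just more transparent and reusable. One small caveat: your justification of $H_2(p)\le 2\sqrt{p(1-p)}$ (``a direct derivative analysis confirms no interior sign change'') is a bit telegraphic---the facts $f(0)=f(1/2)=0$ and $f''(1/2)>0$ alone do not suffice---so in a final write-up you should carry out that derivative analysis explicitly, much as the paper does for its version.
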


\begin{proof}[Proof of Lemma \ref{Main lemma}]
Assume $n>1/\beta$\footnote{ This can be assumed as otherwise $[(1-\beta)n+1,n]$ is empty and the lemma holds trivially.} and set $\gamma=m/n^2$. For simplicity of presentation we let $A$ denote the set $[(1-\beta)n+1,n]$. Our goal is then to prove that for any $\gamma>0$ we have,
\begin{equation}\label{equation - what we need limsup}
    p_{A}(\gamma n^2)\leq 2^{2\sqrt{\frac{\beta \gamma}{1-\beta}}n}\;.
\end{equation}
Observe that a partition of $\gamma n^2$ using integers from $A$ can use at most $\gamma n/(1-\beta)$ numbers. Hence, we can encode each such partition as a solution in nonnegative integers to the inequality $\sum _{i=0}^{\beta n-1} x_i \leq  \gamma n/(1-\beta)$. This is done by taking $x_i$ to be the number of times $n-i$ appears in the partition. Therefore we obtain,
\begin{align}\label{equation - bound on p_A(bn)}
    p_{A}(\gamma n^2 )&\leq \binom{\gamma n/(1-\beta)+\beta n}{\gamma n/(1-\beta)}\;.
\end{align}
We now use the well known inequality,
\begin{equation}\label{equation - entropy inequality}
    \binom{n}{k}\leq 2^{H_2(k/n)n}\;,
\end{equation}
where $H_2(x)$ is the binary entropy function defined by
\[
    H_2(x)=-x\log_2(x)-(1-x)\log_2(1-x)\;.
\]
From \eqref{equation - bound on p_A(bn)} and \eqref{equation - entropy inequality} we obtain,
\begin{equation}\label{equation - a mid bound on p_A(bn)}
    p_{A}(\gamma n^2 )\leq 2^{H_2\left(\frac{\gamma}{\gamma+\beta(1-\beta)}\right)(\gamma /(1-\beta)+\beta )n}\;.
\end{equation}
Let
\[
    f_\beta(\gamma)=H_2\left(\frac{\gamma}{\gamma+\beta(1-\beta)}\right)\frac{\gamma /(1-\beta)+\beta }{\sqrt{\gamma }}\;,
\]
and observe that to prove \eqref{equation - what we need limsup} it is enough to prove that $f_\beta(\gamma)\leq 2\sqrt{\frac{\beta}{{1-\beta}}}$. Noting that $f_{\beta}(\beta(1-\beta))=2\sqrt{\frac{\beta}{{1-\beta}}}$ it is enough to prove that $f_\beta'(\gamma)>0$ for $0<\gamma < \beta(1-\beta)$ and $f_\beta'(\gamma)<0$ for $\gamma>\beta(1-\beta)$.

We first note that
\[
  f_\beta'(\gamma)=\frac{ \beta\left(1 - \beta\right)  \log_2\left(\frac{\beta(1-\beta)}{\beta\left(1 - \beta\right) + \gamma}\right)-\gamma \log_2\left(\frac{\gamma}{\beta\left(1-\beta\right) + \gamma}\right)}{2 \left(1-\beta\right) \gamma^{3/2}}\;.
\]
Since the denominator above is always positive, we focus on the nominator. For every $a > 0$ define
\[
    g_a(x)=
     a\log_2\left(\frac{a}{a+x}\right)-x\log_2\left(\frac{x }{a+x}\right)\;.
\]
We will now show that if $0<x<a$ then $g_a(x)>0$ and if $x>a$ then $g_a(x)<0$, noting that this implies the required assertion regarding $f_\beta'(\gamma)$ upon taking $a = \beta(1-\beta)$.
Differentiating $g_a(x)$ we obtain
\[
   g_a'(x)=\frac{-2a - (a + x) \log\left(\frac{x}{a + x}\right)}{(a + x) \log(2)}\;.
\]
To see that $g_a(x)<0$ for $x>a$ observe that $-(a+x)\log\left(\frac{x}{a+x}\right)$ is a decreasing function as its derivative is $-\frac{a}{x}+\log\left(1+\frac{a}{x}\right)$ which is negative by the well known inequality $\log(1+x)<x$.
Therefore $-(a+x)\log\left(\frac{x}{a+x}\right)\leq -2a\log(1/2)$ implying
\[
    g_a'(x)\leq \frac{-2a(1+\log(1/2))}{(a+x)\log(2)}< 0\;.
\]
Thus, $g_a(x)$ is strictly decreasing for all $x>a$, implying that $g_a(x)<0$ for all $x>a$ as $g_a(a)=0$.

We now prove that $g_a(x)> 0$ for all $0<x< a$. To this end note that
\[
    g_a''(x)=\frac{-a (a - x)}{x (a + x)^2 \log(2)}\;,
\]
which is negative for all $0<x<a$. Therefore $g_a(x)$ is concave in $(0,a)$, implying that $g_a(x)>0$ for all $x \in (0,a)$ as $g_a(a)=0$ and $\lim_{x \to 0}g_a(x)=0$. Thus we have completed the proof that if $0<x<a$ then $g_a(x)>0$ and if $x>a$ then $g_a(x)<0$.
\end{proof}

\begin{proof}[Proof of Theorem \ref{theoupper} equation \eqref{eqTheo1}]
Similar to the proof of Theorem \ref{theoupper} equation \eqref{eqTheo2} it is enough to prove that there exists a set of positive integers $A$ with upper density $\beta$ and $\gcd (A)=1$ satisfying,
\begin{equation}\label{equation - what we need thm 1.2 part 2}
    \log p_{A}(m)\leq {(2\log(2)+o_m(1))\sqrt{\frac{\beta m }{1-\beta}}}\leq (2\log(2)+o_m(1))(1+\beta)\sqrt{{\beta m }}\;,
\end{equation}
where the second inequality holds provided $\beta \leq 1/2$.
To this end fix $n_0=\lceil\frac{1}{\beta}\rceil$. We define $A$ as follows. Define a sequence of sets $A_i$ recursively. Set $A_0=[(1-\beta) n_0+1,n_0]$ and $f(0)=n_0$, for any positive integer $i$ let $f(i+1)=2^{f(i)}$ and $A_{i+1}=[(1-\beta) f(i+1)+1,{f(i+1)}]$. Now let $A=\bigcup_{n\in \mathbb{N}}A_{n}\;.$
It is easy to see that the upper density of $A$ is $\beta$. Further, since $A$ contains two consecutive integers we have $\gcd(A)=1$. We now prove that this set satisfies the first inequality in \eqref{equation - what we need thm 1.2 part 2}.
Let $m$ be any positive integer greater than $n_0$ and let $i$ be the unique integer such that $m\in[f(i)+1,f(i+1)]$. For simplicity of presentation we set $n=f(i)$ and thus $f(i+1)=2^n$. We consider two cases, the first is when $m\in[n+1,(1-\beta) 2^n]$ and the second is when $m\in [(1-\beta) 2^n+1,2^n]$.
\begin{casesp}
\item
Assume $m\in[n+1,(1-\beta) 2^n]$. Let $B=\bigcup_{j \leq i-1}A_{j}$ and note that we have,
\[
    p_A(m) = \sum_{0 \leq k \leq m} p_{B}(k) \cdot p_{A_{i}}(m-k)\;.
\]
Note further that our choice of $f$ guarantees that $|B|\leq \log_{2}(n)\leq \log_{2}(m)$. This, Claim \ref{Claim - trivial bound} and Lemma \ref{Main lemma} give the following bound
\begin{align*}
    p_{A}(m)&\leq \sum_{0 \leq k \leq m} (k+1)^{\log_{2}({m})} \cdot e^{2\log(2){\sqrt{\frac{\beta(m-k)}{1-\beta}}}}\\
    &\leq (m+1)(m+1)^{\log_{2}(m)}e^{2\log(2)\sqrt{\frac{\beta m}{1-\beta} }}\\
    &\leq e^{(2\log(2)+o_m(1))\sqrt{\frac{\beta m}{1-\beta}}}\;.
\end{align*}
Taking logarithm from both sides of the inequality we obtain the first inequality in \eqref{equation - what we need thm 1.2 part 2}.
\item
Assume $m\in [(1-\beta) 2^n+1,2^n]$. Let $B=\bigcup_{j\leq  i}A_{j}$ and note that we have,
\begin{align*}
     p_A(m) &= \sum_{0 \leq k \leq m} p_{B}(m-k) \cdot p_{A_{i+1}}(k)\\
                           &= \sum_{0 \leq k \leq (1-\beta) 2^n} p_{B}(m-k) \cdot p_{A_{i+1}}(k)+\sum_{(1-\beta) 2^n +1\leq k \leq m} p_{B}(m-k) \cdot p_{A_{i+1}}(k)\\
                           &=p_{B}(m)+\sum_{(1-\beta) 2^n+1 \leq k \leq m} p_{B}(m-k) \cdot p_{A_{i+1}}(k)\;.
\end{align*}
For all $k\leq m\leq 2^n$ any partition of $k$ with all parts in $A_{i+1}$ uses at most ${1/(1-\beta)}$ integers from $A_{i+1}\cap [m]$. Therefore for all $k\in [(1-\beta) 2^n+1,m]$ we have
\[
    p_{A_{i+1}}(k)\leq m^{1/(1-\beta)}\;.
\]
Next, if $k>(1-\beta) 2^n$ then
\[
    \log_{2}(k)+\log_2(1/(1-\beta))\geq \log_{2}(2^n) = n\geq |B|\;.
\]
Therefore by Claim \ref{Claim - trivial bound} we obtain that for all $k\in [(1-\beta) 2^n+1,m]$ we have
\[
    p_{B}(k)\leq (k+1)^{|B|}\leq (m+1)^{\log_2(k)+\log_2(1/(1-\beta))}\;.
\]
Combining the above two observations we obtain,
\begin{align*}
    p_A(m) &\leq \sum_{0\leq k \leq m} (m+1)^{\log_2(k)+\log_2(1/(1-\beta))+1/(1-\beta)}\leq (m+1)(m+1)^{\log_2(m)+4}\leq e^{o(\sqrt{m})}\;.\qedhere
\end{align*}
\end{casesp}

\end{proof}

\end{document}